\documentclass[10pt]{amsart}
\textwidth=14.5cm
\oddsidemargin=1cm
\evensidemargin=1cm
\usepackage{amsmath}
\usepackage{amsxtra}
\usepackage{amscd}
\usepackage{amsthm}
\usepackage{amsfonts}
\usepackage{amssymb}
\usepackage{eucal}

\usepackage{latexsym}

\newtheorem{cor}[subsection]{Corollary}
\newtheorem{lem}[subsection]{Lemma}
\newtheorem{prop}[subsection]{Proposition}

\newtheorem{thm}[subsection]{Theorem}

\theoremstyle{remark}


\theoremstyle{definition}

\numberwithin{equation}{section}

\newcommand{\thmref}[1]{Theorem~\ref{#1}}

\newcommand{\lemref}[1]{Lemma~\ref{#1}}
\newcommand{\propref}[1]{Proposition~\ref{#1}}

\emergencystretch=2cm

\newcommand{\nc}{\newcommand}
\nc{\renc}{\renewcommand}
\nc{\ssec}{\subsection}
\nc{\sssec}{\subsubsection}
\nc{\on}{\operatorname}

\nc\ol{\overline}
\nc\ul{\underline}
\nc\wt{\widetilde}
\nc\tboxtimes{\wt{\boxtimes}}
\nc{\wh}{\widehat}
\nc{\mc}{\mathcal}

\nc{\CM}{{\mathcal M}}
\nc{\CN}{{\mathcal N}}
\nc{\CF}{{\mathcal F}}
\nc{\D}{{\mathcal D}}
\nc{\CQ}{{\mathcal Q}}
\nc{\CY}{{\mathcal Y}}
\nc{\CX}{{\mathcal X}}
\nc{\CG}{{\mathcal G}}
\nc{\CE}{{\mathcal E}}
\nc{\CC}{{\mathcal C}}
\nc{\CO}{{\mathcal O}}
\renc{\CC}{{\mathcal C}}
\nc{\CT}{{\mathcal T}}
\nc{\CK}{{\mathcal K}}
\nc{\CS}{{\mathcal S}}
\nc{\CH}{{\mathcal H}}
\nc{\CU}{{\mathcal U}}
\nc{\CV}{{\mathcal V}}
\nc{\CA}{{\mathcal A}}
\nc{\CB}{{\mathcal B}}
\nc{\CW}{{\mathcal W}}
\nc{\CL}{{\mathcal L}}
\nc{\CP}{{\mathcal P}}
\nc{\CI}{{\mathcal I}}
\nc{\CJ}{{\mathcal J}}
\nc{\CR}{{\mathcal R}}
\nc{\CZ}{{\mathcal Z}}

\nc{\BA}{{\mathbb{A}}}
\nc{\BC}{{\mathbb{C}}}
\nc{\BG}{{\mathbb{G}}}
\nc{\BM}{{\mathbb{M}}}
\nc{\BN}{{\mathbb{N}}}
\nc{\BP}{{\mathbb{P}}}
\nc{\BR}{{\mathbb{R}}}
\nc{\BZ}{{\mathbb{Z}}}
\nc{\BV}{{\mathbb{V}}}
\nc{\BW}{{\mathbb{W}}}
\nc{\BS}{{\mathbb{S}}}
\nc{\BD}{{\mathbb{D}}}
\nc{\BQ}{{\mathbb{Q}}}
\nc{\BL}{{\mathbb{L}}}
\renc{\BW}{{\mathbb{W}}}

\nc{\fa}{{\mathfrak{a}}}
\nc{\fb}{{\mathfrak{b}}}
\nc{\fg}{{\mathfrak{g}}}
\nc{\fgl}{{\mathfrak{gl}}}
\nc{\fh}{{\mathfrak{h}}}
\nc{\fj}{{\mathfrak{j}}}
\nc{\fm}{{\mathfrak{m}}}
\nc{\fl}{{\mathfrak{l}}}
\nc{\fn}{{\mathfrak{n}}}
\nc{\fu}{{\mathfrak{u}}}
\nc{\fp}{{\mathfrak{p}}}
\nc{\ff}{{\mathfrak{f}}}
\nc{\fd}{{\mathfrak{d}}}
\nc{\fr}{{\mathfrak{r}}}
\nc{\fs}{{\mathfrak{s}}}
\nc{\fsl}{{\mathfrak{sl}}}

\nc{\hsl}{{\widehat{\mathfrak{sl}}}}
\nc{\hgl}{{\widehat{\mathfrak{gl}}}}
\nc{\hg}{{\widehat{\mathfrak{g}}}}
\nc{\hb}{{\widehat{\mathfrak{b}}}}
\nc{\hn}{{\widehat{\mathfrak{n}}}}

\nc{\fA}{{\mathfrak{A}}}
\nc{\fB}{{\mathfrak{B}}}
\nc{\fO}{{\mathfrak{O}}}
\nc{\fD}{{\mathfrak{D}}}
\nc{\fE}{{\mathfrak{E}}}
\nc{\fF}{{\mathfrak{F}}}
\nc{\fG}{{\mathfrak{G}}}
\nc{\fK}{{\mathfrak{K}}}
\nc{\fL}{{\mathfrak{L}}}
\nc{\fC}{{\mathfrak{C}}}
\nc{\fM}{{\mathfrak{M}}}
\nc{\fN}{{\mathfrak{N}}}
\nc{\fH}{{\mathfrak{H}}}
\nc{\fP}{{\mathfrak{P}}}
\nc{\fU}{{\mathfrak{U}}}
\nc{\fV}{{\mathfrak{V}}}
\nc{\fZ}{{\mathfrak{Z}}}
\nc{\fz}{{\mathfrak{z}}}

\nc{\bc}{{\mathbf{c}}}
\nc{\bd}{{\mathbf{d}}}
\nc{\bh}{{\mathbf{h}}}
\nc{\be}{{\mathbf{e}}}
\nc{\ba}{{\mathbf{a}}}
\nc{\bj}{{\mathbf{j}}}
\nc{\bn}{{\mathbf{n}}}
\nc{\bp}{{\mathbf{p}}}
\nc{\bg}{{\mathbf{g}}}
\nc{\bq}{{\mathbf{q}}}
\nc{\bs}{{\mathbf{s}}}
\nc{\bu}{{\mathbf{u}}}
\nc{\bv}{{\mathbf{v}}}
\nc{\bx}{{\mathbf{x}}}
\nc{\by}{{\mathbf{y}}}
\nc{\bb}{{\mathbf{b}}}
\nc{\bw}{{\mathbf{w}}}
\nc{\bA}{{\mathbf{A}}}
\nc{\bK}{{\mathbf{K}}}
\nc{\bB}{{\mathbf{B}}}
\nc{\bC}{{\mathbf{C}}}
\nc{\bD}{{\mathbf{D}}}
\nc{\bH}{{\mathbf{H}}}
\nc{\bM}{{\mathbf{M}}}
\nc{\bN}{{\mathbf{N}}}
\nc{\bV}{{\mathbf{V}}}
\nc{\bW}{{\mathbf{W}}}
\nc{\bL}{{\mathbf{L}}}
\nc{\bU}{{\mathbf{U}}}
\nc{\bX}{{\mathbf{X}}}
\nc{\bI}{{\mathbf{I}}}
\nc{\bZ}{{\mathbf{Z}}}
\nc{\bS}{{\mathbf{S}}}

\nc{\sA}{{\mathsf{A}}}
\nc{\sB}{{\mathsf{B}}}
\nc{\sC}{{\mathsf{C}}}
\nc{\sD}{{\mathsf{D}}}
\nc{\sF}{{\mathsf{F}}}
\nc{\sH}{{\mathsf{H}}}
\nc{\sE}{{\mathsf{E}}}
\nc{\sG}{{\mathsf{G}}}
\nc{\sK}{{\mathsf{K}}}
\nc{\sM}{{\mathsf{M}}}
\nc{\sO}{{\mathsf{O}}}
\nc{\sQ}{{\mathsf{Q}}}
\nc{\sP}{{\mathsf{P}}}
\nc{\sV}{{\mathsf{V}}}
\nc{\sZ}{{\mathsf{Z}}}
\nc{\sfp}{{\mathsf{p}}}
\nc{\sr}{{\mathsf{r}}}
\nc{\sg}{{\mathsf{g}}}
\nc{\sh}{{\mathsf{h}}}
\nc{\sk}{{\mathsf{k}}}
\nc{\ssf}{{\mathsf{f}}}
\nc{\sv}{{\mathsf{v}}}
\nc{\ssh}{{\mathsf{h}}}
\nc{\sse}{{\mathsf{e}}}
\nc{\sfb}{{\mathsf{b}}}
\nc{\sfc}{{\mathsf{c}}}
\nc{\sd}{{\mathsf{d}}}

\nc{\Av}{\on{Av}}
\nc{\act}{\on{act}}
\nc{\Hom}{\on{Hom}}
\nc{\Ext}{\on{Ext}}
\nc{\Tor}{\on{Tor}}
\nc{\End}{\on{End}}
\nc{\Lie}{\on{Lie}}
\nc{\Loc}{\on{Loc}}
\nc{\IC}{\on{IC}}
\nc{\Aut}{\on{Aut}}
\nc{\rk}{\on{rk}}
\nc{\Sh}{\on{Sh}}
\nc{\Perv}{\on{Perv}}
\nc{\pos}{{\on{pos}}}
\nc{\Conv}{\on{Conv}}
\nc{\Sph}{\on{Sph}}
\nc{\Sym}{\on{Sym}}
\nc{\Rep}{\on{Rep}}
\nc{\RepH}{{\mc R}ep(H)}
\nc{\Fun}{\on{Fun}}
\nc{\Id}{\on{Id}}
\nc{\id}{\on{id}}
\renc{\mod}{\on{--mod}}

\nc{\oG}{\overset{\circ}{G}{}}
\nc{\oGB}{{\overset{\circ}{G/B}{}}}
\nc{\oGN}{{\overset{\circ}{G/N}{}}}
\nc{\uBC}{\underline{\BC}}

\nc{\crit}{{\on{crit}}}
\nc{\reg}{{\on{reg}}}
\nc{\nilp}{{\on{nilp}}}
\nc{\ord}{\on{ord}}
\nc{\nil}{\wt{\on{reg}}}
\nc{\mb}{\mathbf}
\nc{\ren}{{\on{ren}}}
\nc{\res}{\on{res}}
\nc{\RS}{{\on{RS}}}
\nc{\Dist}{\on{Dist}}
\nc{\semiinf}{{\frac{\infty}{2}}}
\nc{\semiinfi}{{\frac{\infty}{2}+i}}
\nc{\semiinfb}{{\frac{\infty}{2}+\bullet}}
\nc{\torsemiinf}{{\overset{\semiinf}\otimes}}
\nc{\Hitch}{\on{Hitch}}

\nc{\hl}{\overset{\leftarrow}h}
\nc{\hr}{\overset{\rightarrow}h}
\nc\Dh{\widehat{\D}}
\nc{\Gr}{\on{Gr}}
\nc{\Grb}{\ol{\Gr}{}_G}
\nc{\Fl}{\on{Fl}}
\nc{\Flt}{\wt{\Fl}{}}
\nc{\Pic}{\on{Pic}}
\nc{\Bun}{\on{Bun}}

\nc{\bDR}{\mathbf {DR}}
\nc{\uV}{\underline{V}}
\nc{\arrowtimes}{\overset{\to}\otimes}
\nc{\hattimes}{\widehat\otimes}
\nc{\larrowtimes}{\overset{\leftarrow}\otimes}
\nc{\shriektimes}{\overset{!}\otimes}
\nc{\startimes}{\overset{*}\otimes}
\nc{\sCliff}{\mathsf {Cliff}}
\nc{\sSpin}{\mathsf {Spin}}

\nc{\one}{{\mathbf{1}}}

\nc\Spec{\on{Spec}}
\nc{\Pro}{\on{Pro}}
\nc{\QCoh}{\on{QCoh}}
\nc{\uHom}{\underline{\on{Hom}}}
\nc{\RHom}{\on{RHom}}
\nc{\uRHom}{\underline{\on{RHom}}}
\nc{\CHom}{{\mathcal Hom}}
\nc{\uCHom}{\underline{{\mathcal Hom}}}
\nc{\uCRHom}{\underline{{\mathcal R}{\mathcal Hom}}}

\nc{\cg}{{\check \fg}}
\nc{\Op}{\on{Op}_{\cG}}
\nc{\nOp}{\on{Op}^{\nilp}_{\cG}}
\nc{\nMOp}{\on{MOp}^{\nilp}_{\cG}}
\nc{\rOp}{\on{Op}^{\reg}_{\cG}}

\nc{\tg}{\wt{\check \fg}}
\nc{\cn}{\check \fn}
\nc{\tn}{\wt{\cn}}
\nc{\cG}{{\check G}}
\nc{\cB}{\check B}
\nc{\cT}{\check T}
\nc{\cH}{\check H}
\nc{\cb}{\check \fb}
\nc{\cN}{\check N}
\nc{\MOp}{\on{MOp}}
\nc{\tN}{\wt{\CN}_{\cG}}
\nc{\dIsom}{{\mathsf{Isom}}_{\Op}}
\nc{\disom}{{\mathsf{isom}}_{\Op}}
\nc{\Kdv}{{\mathsf{Isom}}_{\Op^\reg}}
\nc{\kdv}{{\mathsf{isom}}_{\Op^\reg}}
\nc{\Isom}{{\mathsf{Isom}}}
\nc{\isom}{{\mathsf{isom}}}

\nc{\wcosta}{j_{\wt{w},*}}
\nc{\wsta}{j_{\wt{w},!}}
\nc{\wcost}{j_{w,*}}
\nc{\wst}{j_{w,!}}

\nc{\epsi}{{\mathbf e}^\psi}
\nc{\epsip}{{\mathbf e}^{\psi'}}

\nc{\Ppi}{{\mathbf \Pi}}

\nc{\hCO}{{\hat{\CO}}}
\nc{\hCK}{{\hat{\CK}}}

\nc{\CPreg}{\CP_{G,\on{Op}^\reg}}
\nc{\CPBreg}{\CP_{B,\on{Op}^\reg}}
\nc{\CPnilp}{\CP_{G,\on{Op}^\nilp}}
\nc{\CPBnilp}{\CP_{B,\on{Op}^\nilp}}
\nc{\CPla}{\CP_{G,\on{Op}_{\cla}}}
\nc{\CPBla}{\CP_{B,\on{Op}_{\cla}}}

\nc{\Cat}{\hg_\crit\mod^{I,m}_\nilp}
\nc{\Catf}{{}^{fl}\hg_\crit\mod^{I,m}_\nilp}
\nc{\DCat}{D^b(\hg_\crit\mod_\nilp)^{I^0}}
\nc{\DCatf}{{}^{fl} D^b(\hg_\crit\mod_\nilp)^{I^0}}
\nc{\Catr}{\hg_\crit\mod^{I,m}_\reg}
\nc{\Catrf}{{}^{fl}\hg_\crit\mod^{I,m}_\reg}
\nc{\DCatr}{D^b(\hg_\crit\mod_\reg)^{I^0}}
\nc{\DCatrf}{{}^{fl} D^b(\hg_\crit\mod_\reg)^{I^0}}


\nc{\Z}{{\mathbb Z}}
\nc{\C}{{\mathbb C}}
\nc{\pone}{{\mathbb C}{\mathbb P}^1}
\nc{\pa}{\partial}
\nc{\F}{{\mathcal F}}
\nc{\arr}{\rightarrow}
\nc{\larr}{\longrightarrow}
\nc{\al}{\alpha}
\nc{\ri}{\rangle}
\nc{\lef}{\langle}
\nc{\W}{{\mathcal W}}
\nc{\la}{\lambda}
\nc{\ep}{\epsilon}
\nc{\su}{\widehat{{\mathfrak s}{\mathfrak l}}_2}
\nc{\sw}{{\mathfrak s}{\mathfrak l}}
\nc{\g}{{\mathfrak g}}
\nc{\h}{{\mathfrak h}}
\nc{\n}{{\mathfrak n}}
\nc{\N}{\widehat{\n}}
\nc{\De}{\Delta}
\nc{\gt}{\widetilde{\g}}
\nc{\Ga}{\Gamma}
\nc{\z}{{\mathfrak Z}}
\nc{\La}{\Lambda}
\nc{\cri}{_{\kappa_c}}
\nc{\kk}{h^\vee}
\nc{\sun}{\widehat{\sw}_N}
\nc{\si}{\sigma}
\nc{\el}{\ell}
\nc{\bi}{\bibitem}
\nc{\om}{\omega}
\nc{\ds}{\displaystyle}
\nc{\dzz}{\frac{dz}{z}}
\nc{\Res}{\on{Res}}
\nc{\Cal}{\mathcal}
\nc{\ot}{\otimes}
\nc{\R}{{\mc R}}
\nc{\yy}{{\mc Y}}
\nc{\ga}{\gamma}

\nc{\us}{\underset}
\nc{\opl}{\oplus}
\nc{\beq}{\begin{equation}}
\nc{\Fq}{{\mathbb F}_q}
\nc{\Mq}{{\mathcal M}}
\nc{\lan}{\langle}
\nc{\ran}{\rangle}

\nc{\Vect}{\on{Vect}}
\nc{\ghat}{\wh\fg}
\nc{\T}{\mc T}
\nc{\Tloc}{\T^\g_{\on{loc}}}
\nc{\vac}{|0\ran}
\nc{\Wick}{{\mb :}}
\nc{\delz}{\partial_z}
\nc{\K}{{\cali K}}
\nc{\cali}{\mathcal}
\nc{\li}{\mathfrak l}
\nc{\lt}{\widetilde{\li}}
\nc{\astar}{a^*}
\nc{\cA}{{\mc A}}
\nc{\ka}{\kappa}

\nc{\OO}{{\mc O}}
\nc{\AutO}{\on{Aut}\OO}
\nc{\DerO}{\on{Der}\OO}
\nc{\DerpO}{\on{Der}_+\OO}
\nc{\V}{{\mc V}}
\nc{\hh}{\wh{\h}}

\nc{\pp}{{\mathfrak p}}
\nc{\mm}{{\mathfrak m}}
\nc{\rr}{{\mathfrak r}}
\nc{\ket}{\rangle}
\nc{\zz}{{\mathfrak z}}
\nc{\gr}{\on{gr}}
\nc{\Spe}{\on{Spec}}
\nc{\rv}{\crho}
\nc{\can}{\on{can}}
\nc{\Db}{{\mathbb D}}
\nc{\ww}{w}

\nc{\RR}{\on{R}}
\nc{\PPi}{{\mathbf \Pi}}
\nc{\M}{{\mathbb M}}
\nc{\Mv}{{\mathbb M}^\vee}
\nc{\VV}{{\mathbb V}}
\nc{\bsl}{\backslash}

\nc{\bchi}{{\mathbf {\chi}}}
\nc{\anch}{{\mathbf {anch}}}

\nc{\cla}{{\check{\la}}}
\nc{\cmu}{{\check{\mu}}}
\nc{\crho}{{\check{\rho}}}
\nc{\com}{{\check{\omega}}}
\nc{\DD}{{\mc D}}
\nc{\E}{{\mc E}}
\nc{\Ll}{{\mc L}}




\nc{\ConnX}{\on{Conn}_{\cH}(\omega^{\rho}_X)}
\nc{\ConnD}{\on{Conn}_{\cH}(\omega_{\D}^{\rho})}
\nc{\ConnDt}{\on{Conn}_{\cH}(\omega_{\D^\times}^{\rho})}

\nc{\Hecke}{{\on{Hecke}}}

\nc{\cLambda}{{\check\Lambda}}
\nc{\cnu}{{\check\nu}}
\nc{\ceta}{{\check\eta}}

\nc{\Ind}{\on{Ind}}

\nc{\CTop}{{\mathcal Top}}

\nc{\ppart}{(\!(t)\!)}

\nc{\qu}{/\!/}

\nc{\gen}{\on{gen}}

\nc{\alg}{\on{alg}}
\nc{\geom}{\on{geom}}
\nc{\Jet}{\on{Jets}}
\nc{\aut}{\on{aut}}
\nc{\Der}{\on{Der}}

\nc{\uW}{\underline{W}}
\nc{\uU}{\underline{U}}

\nc{\aff}{\on{aff}}
\nc{\dirsum}{\oplus}

\nc{\Vl}{\BV_{\fg,\crit}^\la}

\nc{\gmod}{\hg_\crit\mod}
\nc{\ch}{{\on{ch}}}
\nc{\univ}{{\on{univ}}}

\begin{document}

\title{Local geometric Langlands correspondence: the spherical case}

\dedicatory{To Masaki Kashiwara on his 60th birthday}

\author[Edward Frenkel]{Edward Frenkel$^1$}\thanks{$^1$Supported by
DARPA and AFOSR through the grant FA9550-07-1-0543.}

\address{Department of Mathematics, University of California,
  Berkeley, CA 94720, USA}

\email{frenkel@math.berkeley.edu}

\author[Dennis Gaitsgory]{Dennis Gaitsgory$^2$}\thanks{$^2$Supported by
NSF grant 0600903.}

\address{Department of Mathematics, Harvard University,
Cambridge, MA 02138, USA}

\email{gaitsgde@math.harvard.edu}

\date{November 2007}

\begin{abstract}
A module over an affine Kac--Moody algebra $\ghat$ is called spherical
if the action of the Lie subalgebra $\g[[t]]$ on it integrates to an
algebraic action of the corresponding group $G[[t]]$. Consider the
category of spherical $\ghat$-modules of critical level. In this paper
we prove that this category is equivalent to the category of
quasi-coherent sheaves on the ind-scheme of opers on the punctured
disc which are unramified as local systems. This result is a
categorical version of the well-known description of spherical vectors
in representations of groups over local non-archimedian fields. It may
be viewed as a special case of the local geometric Langlands
correspondence proposed in \cite{FG2}.

\end{abstract}

\maketitle

\section{Introduction}

A general framework for the local geometric Langlands correspondence was
proposed in our earlier work \cite{FG2} (see also
\cite{FG3}--\cite{FG5} and \cite{F:book}). According to our proposal,
to each ``local Langlands parameter'' $\sigma$, which is a
$\cG$--local system on the punctured disc $\D^\times = \on{Spec}
\C\ppart$ (or equivalently, a $\cG$-bundle with a connection on
$\D^\times$), there should correspond a category $\CC_\sigma$ equipped
with an action of the formal loop group $G\ppart$. Even more
ambitiously, we expect that there exists a category ${\mc C}_{\univ}$ fibered
over the stack $\Loc_{\cG}(\D^\times)$ of $\cG$-local systems, equipped
with a fiberwise action of the ind-group $G\ppart$, whose fiber
category at $\sigma$ is ${\mc C}_\sigma$. Moreover, we expect 
${\mc C}_{\univ}$ to be the universal category equipped with an action of
$G\ppart$. In other words, we expect that $\Loc_{\cG}(\D^\times)$ is
the universal parameter space for the categorical representations of
$G\ppart$. The ultimate form of the local Langlands correspondence
for loop groups should be, roughly, the following statement:

\medskip

\begin{equation}    \label{ultimate}
\boxed{\begin{matrix} \text{categories fibering} \\
\text{over } \Loc_{\cG}(\D^\times) \end{matrix}} \quad
\Longleftrightarrow  \quad \boxed{\begin{matrix} \text{categories
equipped} \\ \text{with action of } G\ppart \end{matrix}}
\end{equation}

\bigskip

We should point out, however, that neither the notion of category
fibered over a non-algebraic stack such as $\Loc_{\cG}(\D^\times)$,
nor the unversal property alluded to above are easy to formulate. So
for now \eqref{ultimate} should be understood heuristically, as a
guiding principle.

\medskip

As we explained in \cite{FG2}, the local geometric Langlands
correspondence should be viewed as a categorification of the local
Langlands correspondence for the group $G(F)$, where $F$ is a local
non-archimedian field. This means that the categories ${\mc
C}_\sigma$, equipped with an action of $G\ppart$, that we wish to
attach to the Langlands parameters $\sigma \in \Loc_{\cG}(\D^\times)$
should be viewed as categorifications of smooth representations of
$G(F)$ in the sense that we expect the Grothendieck groups of the
categories $\CC_\sigma$ to ``look like'' irreducible smooth
representations of $G(F)$.

\subsection{The spherical part}

In the study of representations $\pi$ of $G(F)$, a standard tool is
to consider the subspaces $\pi^K$ of vectors fixed by open compact
subgroups $K$ of $G(F)$.

\medskip

This procedure has a categorical counterpart. Let $K$ be a
group-scheme contained in $G[[t]]$ and containing the $N$th congruence
subgroup $K_N$ for some $N$ (i.e., the subgroup of $G[[t]]$ consisting
of elements congruent to $1$ modulo $t^N\C[[t]]$). For example, $K$
can be $G[[t]]$ itself, or the Iwahori subgroup $I$.

\medskip

Given a category $\CC$, acted on by $G\ppart$, we can consider the
corresponding $K$-equivariant category $\CC^K$. Via \eqref{ultimate},
any such $\CC^K$ is also a category fibered over
$\Loc_{\cG}(\D^\times)$.

\medskip

This procedure applies in particular to ${\mc C}_{\univ}$. Although at
present, we do not know how to construct the entire category 
${\mc C}_{\univ}$, we do have a guess what ${\mc C}_{\univ}^K$ for
some choices of $K$.

\medskip

In this paper we specialize to the simplest case $K=G[[t]]$. (Another
case, which can be explicitly analyzed is that of $K=I$, discussed in
\cite{FG2}.) Based on the analogy with the classical local Langlands
correspondence for spherical representations, we propose:

\begin{equation} \label{spherical universal}
\CC_{\univ}^{G[[t]]}\simeq \Rep(\cG).
\end{equation}

Here $\Rep(\cG)$ is the category of (algebraic) representations of
$\cG$, which can be also thought as the category of quasi-coherent
sheaves on the stack $\on{pt}\hspace*{-1mm}/\cG$. The structure of
category fibered over $\Loc_{\cG}(\D^\times)$ comes from the maps of
stacks
\begin{equation}    \label{unr}
\on{pt}\hspace*{-1mm}/\cG\simeq \Loc_{\cG}^{\on{unr}}\to
\Loc_{\cG}(\D^\times)
\end{equation}
corresponding to the inclusion of the stack $\Loc_{\cG}^{\on{unr}}$ of
unramified local systems (or, equivalently, local systems on the
unpunctured disc $\D$) into the stack $\Loc_{\cG}(\D^\times)$ of all
local systems.

\subsection{Representations of critical level}

In \cite{FG2} we have considered a specific example of a category
equipped with an action of $G\ppart$; namely, the category
$\hg_\crit\mod$ of modules over the affine Kac--Moody algebra $\hg$ of
critical level. It carries a canonical action
of the ind-group $G\ppart$ via its adjoint action on $\hg_\crit$. 

\medskip

What should be the relationship between $\gmod$ and
the conjectural universal category ${\mc C}_{\univ}$? 

\medskip

We note that the category $\gmod$ naturally fibers over the ind-scheme
$\Op(\D^\times)$ of $\cG$-opers on $\D^\times$ introduced in
\cite{BD}.  This is because, according to \cite{FF,F:wak}, the center
$\fZ_{\fg}$ of the category $\gmod$ is isomorphic to the algebra of
functions on $\Op(\D^\times)$.

\medskip

The idea of
\cite{FG2} is that the latter fibration is a ``base change'' of
$\CC_{\univ}$, that is, there is a Cartesian diagram
\begin{equation}    \label{base change}
\begin{CD}
\gmod @>>> {\mc C}_{\univ} \\
@VVV @VVV \\
\Op(\D^\times) @>{\al}>> \Loc_{\cG}(\D^\times)
\end{CD}
\end{equation}
which commutes with the action of $G\ppart$ along the fibers of the two
vertical maps. In other words,
\begin{equation}    \label{main eq}
\gmod \simeq {\mc C}_{\univ} \underset{\Loc_{\cG}(\D^\times)}\times
\Op(\D^\times).
\end{equation}

\medskip

Given a $\cg$-oper $\chi$, let us consider it as a point of $\Spec(\fZ_\fg)$,
i.e., a character of $\fZ_\fg$. Let $\hg_\crit\mod_\chi$ be the full
subcategory $\hg_\crit\mod_\chi$ of $\hg_\crit\mod$ whose
objects are $\ghat_\crit$-modules, on which the $\fZ_\fg$ acts
according to this character. This is the fiber category of the
category $\gmod$ over $\chi \in \Op(\D^\times)$. 

\medskip

Let $\sigma=\al(\chi)\in \Loc_{\cG}(\D^\times)$. By
\eqref{main eq}, we have:
\begin{equation} \label{oper eq}
\CC_\sigma\simeq \hg_\crit\mod_\chi.
\end{equation} 

\medskip

As was mentioned above, at the moment 
we do not have an independent definition of ${\mc C}_{\univ}$, and
therefore we cannot make the equivalences \eqref{main eq} 
and \eqref{oper eq} precise. But
we use it as our guiding principle. This leads us to a number of
interesting corollaries, some of which have been discussed in
\cite{FG2}--\cite{FG5}.

\medskip

For example, if $\chi, \chi'$ are two $\cG$-opers, such that the
corresponding local systems $\al(\chi)$ and $\al(\chi')$ are isomorphic,
for every choice of an isomorphism we are supposed to have 
an equivalence of categories:
\begin{equation} \label{indep of oper}
\gmod_\chi\simeq \gmod_{\chi'}.
\end{equation}
This is a highly non-trivial conjecture about
representations of $\hg_\crit$.

\subsection{Harish-Chandra categories}

Let us return to the discussion of the category of $K$-equivariant
objects in the context of $\CC=\gmod$. The corresponding category
$\gmod^K$ identifies with the category of $(\ghat_{\crit},K)$
Harish-Chandra modules. When $K$ is connected, this is a full abelian
subcategory of $\gmod$, consisting of modules, on which the action of
the Lie algebra $\on{Lie}(K) \subset \ghat_{\crit}$ is integrable,
i.e., comes from an algebraic action of $K$.

\medskip

Now specialize to the case $K=G[[t]]$. We call objects of the
corresponding category $\gmod^{G[[t]]}$ of $G[[t]]$-equivariant
$\ghat_\crit$-modules {\em spherical}. Combining
eqns. \eqref{spherical universal}, \eqref{unr} and \eqref{main eq}, we
arrive at the following equivalence:

\begin{equation} \label{spherical HCH}
\gmod^{G[[t]]}\simeq \QCoh\left(\Loc^{\on{unr}}_{\cG}
\underset{\Loc_{\cG}(\D^\times)}\times \Op(\D^\times)\right).
\end{equation}

Here we should remark that although the stack $\Loc_{\cG}(\D^\times)$ is
a problematic object to work with, the fiber product 
$$\Loc^{\on{unr}}_{\cG} \underset{\Loc_{\cG}(\D^\times)}\times
\Op(\D^\times)$$ appearing on the right-hand side of \eqref{spherical
HCH} is a well-defined (non-reduced) ind-subscheme of
$\Op(\D^\times)$. This is the moduli ind-scheme of opers that are
unramified as local systems. We denote this ind-scheme by
$\Op^{\on{unr}}$. It is a disjoint union of formal schemes
$\Op^{\on{unr},\la}$, $\la$ being a dominant weight, where the
reduced scheme corresponding to each $\Op^{\on{unr},\la}$ is the
scheme $\Op^{\reg,\la}$ of $\la$-regular opers introduced in
\cite{FG2}.

\medskip

Thus, the heuristic guess given by \eqref{spherical HCH} leads to the
following precise statement, which is the main result of this paper:

\medskip

\noindent{\bf Main Theorem.} {\em The category $\gmod^{G[[t]]}$ of
spherical $\ghat_\crit$-modules is equivalent to the category of
quasi-coherent sheaves on the ind-scheme $\Op^{\on{unr}}$ of
$\cG$-opers on $\D^\times$ unramified as local systems.}

\medskip

Moreover, we show that a functor from the former category to the
latter one is an analogue of the Whittaker functor.

\subsection{Some corollaries}

Let $\chi$ be a $\BC$-point of $\Op(\D^\times)$, and let us consider
the category $\gmod^{G[[t]]}_\chi$. As an abelian category, this is a
full subcategory of $\gmod_\chi$, consisting of $G[[t]]$-integrable
modules.

\medskip

One can show (see \cite{FG3}, Corollary 1.11) that this category is
$0$ unless $\chi\in \Op^{\on{unr}}$. In the latter case, from the Main
Theorem we obtain that the category $\gmod^{G[[t]]}_\chi$ is
equivalent to the category of vector spaces. This result is the first
test for our prediction that $\gmod_\chi$, as a category equipped with
a $G\ppart$-action, depends only on $\alpha(\chi)$, as expected in
\eqref{indep of oper}. In addition, this equivalence is in agreement
with a classical fact that the space of spherical vectors in an
irreducible representation of $G(F)$ is either zero or
one-dimensional.

\medskip

As another corollary of the Main Theorem, we obtain the following
description of the algebra of self-Exts of the Weyl modules $\BV_\la$
in the derived category $D(\gmod^{G[[t]]})$ of
$(\ghat_\crit,G[[t]])$ Harish-Chandra modules:
$$
\Ext^\bullet_{D(\hg_\crit\mod^{G[[t]]})}(\Vl,\Vl)\simeq
\Lambda^\bullet_{\fz^{\reg,\la}_\fg} (\CN^\la_{\reg/\on{unr}}),
$$
where $\CN^\la_{\reg/\on{unr}}$ is the bundle of $\Op^{\reg,\la}$ in
$\Op^{\on{unr},\la}$. (In the above formula we identify the algebra
of function on $\Op^{\reg,\la}$ with the corresponding quotient of
$\fZ_\fg$, denoted $\fz^{\reg,\la}_\fg$.) For $\la=0$ this isomorphism
was previously established in \cite{FT} by other methods.

\subsection{Structure of the proof}

The proof of the Main Theorem is quite simple. The main idea is that
the category $\gmod^{G[[t]]}$ has a universal object, denoted
$\fD^{\on{ch}}_{G,\crit}$, which is the vacuum module of the chiral
algebra of differential operators on $G$.  The module
$\fD^{\on{ch}}_{G,\crit}$ is in fact a $\hg_\crit$-bimodule, and for
any other object $\CM\in \gmod^{G[[t]]}$ we have
$$\CM\simeq
\fD^{\on{ch}}_{G,\crit}\overset{\frac{\infty}{2}}{\underset{\fg\ppart}
\otimes} \CM$$
(here $\overset{\frac{\infty}{2}}{\underset{\fg\ppart} \otimes}$
stands for the semi-infinite Tor functor).

Therefore, in order to define functors and check isomorphisms on
$\gmod^{G[[t]]}$, it is enough to do so just for the module
$\fD^{\on{ch}}_{G,\crit}$. Thus, in Sect. 2 we prove a theorem that
describes the structure of $\fD^{\on{ch}}_{G,\crit}$ as a bi-module
over $\gmod^{G[[t]]}$, and in Sect. 3 we derive our Main Theorem from
this structure theorem.

\section{Chiral differential operators on $G$ at the critical level}

In this section we describe the structure of the chiral algebra of
differential operators (CADO) on a simple connected simply-connected
algebraic group $G$ over $\C$ at the critical level, viewed as a
bimodule over $\ghat_\crit$.

\ssec{Notation}

We will follow the notation of \cite{FG2}. In particular,
$\ghat_\crit$ is the critical central extension of the formal loop
algebra $\g\ppart$, $\hg_\crit\mod$ is the category of discrete
modules over $\hg_\crit$, $\fZ_\fg$ is the center of $\hg_\crit\mod$
(or, equivalently, of the completion of the enveloping algebra of
$\ghat_\crit$). This is a topological commutative algebra. According
to a theorem of \cite{FF,F:wak}, the corresponding ind-scheme
$\Spec(\fZ_\fg)$ is canonically isomorphic to the moduli space
$\Op(\D^\times)$ of $\cG$-opers on the formal punctured disc, where
$\cG$ is the Langlands dual group to $G$ (of adjoint type). For the
definition of $\Op(\D^\times)$, see \cite{BD}.

\medskip

For $\la\in \Lambda^+$, we let $\fz^{\reg,\la}_\fg$ denote the
quotient of $\fZ_\fg$ corresponding to the sub-scheme
$\Op^{\reg,\la}\subset \Op(\D^\times)$ introduced in \cite{FG2},
Section 2.9. Let
$$
\BV_{\fg,\crit}^\la = \Ind^{\hg_\crit}_{\fg[[t]] \oplus \C {\bf
1}}(V^\la) :=U(\hg_\crit) \underset{U(\fg[[t]] \oplus \C {\mb 1})}
\otimes V^\la
$$
be the Weyl module with dominant integral highest weight $\la \in
\Lambda^+$. According to \cite{FG6}, Theorem 1, the action of $\fZ_\fg$ on
$\BV_{\fg,\crit}^\la$ factors as follows:
$$
\fZ_\fg\twoheadrightarrow \fz^{\reg,\la}_\fg\simeq \on{End}(\Vl).
$$
Furthermore, $\Vl$ is flat (and in fact, free) as a
$\fz^{\reg,\la}_\fg$-module.

\medskip

Let $\hg_\crit\mod^{G[[t]]}$ be the full abelian subcategory of
$\hg_\crit\mod$. In this paper we will work with the "naive"
derived category $D(\hg_\crit\mod^{G[[t]]})$. However,
by generalizing the argument of \cite{FG2}, Sect. 20.16, 
one can identify $D(\hg_\crit\mod^{G[[t]]})$ with the 
$G[[t]]$-equivariant derived category corresponding
to $\hg_\crit\mod$, as introduced in {\it loc. cit.}, Sect. 20.8. 

\medskip

In particular, for $\CM\in \hg_\crit\mod^{G[[t]]}$ we have:
$$\Ext^i_{\hg_\crit\mod^{G[[t]]}}(\BV_{\fg,\crit}^\la,\CM)\simeq
\Ext^i_{G[[t]]}(V^\la,\CM).$$

\ssec{Unramified opers}

Let $\Op^{\on{unr}}\subset \Op(\D^\times)$ be the ind-subscheme of
opers that are unramified as local systems. For any $\C$-algebra $A$, the set of 
$A$-points of $\Op^{\on{unr}}$ is by definition the set of opers on $\on{Spec}
A\ppart$, which are isomorphic, as local systems, to the trivial local
system. We have:
\begin{equation}    \label{decomp}
\Op^{\on{unr}}\simeq \bigcup_{\la \in \Lambda^+} \,
\Op^{\on{unr},\la},
\end{equation}
where $\Op^{\on{unr},\la}$ are pairwise disjoint formal 
sub-schemes of $\Op(\D^\times)$ with
$$(\Op^{\on{unr},\la})_{\on{red}}\simeq \Op^{\reg,\la}.$$

We will also use the notation $\Spec(\fZ_\fg^{\on{unr}})$,
$\Spec(\fZ_\fg^{\on{unr},\la})$ for these ind-schemes. Let
$\iota^\la_{\reg/\on{unr}}$ denote the closed embedding
$\Spec(\fz_\fg^{\reg,\la})\hookrightarrow
\Spec(\fZ_\fg^{\on{unr},\la})$; let $\bI^\la$ denote the (closed)
ideal of $\Spec(\fz_\fg^{\reg,\la})$ in
$\Spec(\fZ_\fg^{\on{unr},\la})$; let $\CN^{\la}_{\reg/\on{unr}}$
be the normal scheme to $\Spec(\fz_\fg^{\reg,\la})$ in
$\Spec(\fZ_\fg^{\on{unr},\la})$. It follows from \cite{FG2}, Section
4.6, that its sheaf of sections is a locally free
$\fz_\fg^{\reg,\la}$-module (in other words,
$\CN^{\la}_{\reg/\on{unr}}$ is s vector bundle over
$\Spec(\fz_\fg^{\reg,\la})$).  Moreover, $\fZ_\fg$ carries a Poisson
structure, which identifies $\CN^{\la}_{\reg/\on{unr}}$ with
$\Omega^1(\fz_\fg^{\reg,\la})$.

\medskip

The following fact was established in \cite{FG3}, Corollary 1.11 (note
that $\Spec(\fZ_\fg^{\on{unr}})$ was denoted by
$\Spec(\fZ_\fg^{\on{m.f.}})$ in \cite{FG3}).

\begin{thm}  \label{support}
The support in $\Spec(\fZ_\fg)$ of every $\CM\in
\hg_\crit\mod^{G[[t]]}$ is contained in $\Spec(\fZ_\fg^{\on{unr}})$.
\end{thm}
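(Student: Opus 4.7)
The plan is to reduce the support statement to the case of Weyl modules, where the action of $\fZ_\fg$ has been determined by the cited result of \cite{FG6}.

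First, I would use the universal property of the induced module $\BV_{\fg,\crit}^\la = \Ind^{\hg_\crit}_{\fg[[t]] \oplus \C\mathbf{1}}(V^\la)$, namely the adjunction
$$\Hom_{\gmod^{G[[t]]}}(\BV_{\fg,\crit}^\la, \CM) \simeq \Hom_{G[[t]]}(V^\la, \CM),$$
to produce a canonical evaluation map
$$\bigoplus_{\la \in \Lambda^+} \BV_{\fg,\crit}^\la \otimes \Hom_{G[[t]]}(V^\la, \CM) \longrightarrow \CM.$$
This map is surjective, since $\CM$ decomposes as a $G[[t]]$-representation into a direct sum of its $V^\la$-isotypic components, each of which is hit by the corresponding summand on the left. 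So any object of $\gmod^{G[[t]]}$ is a quotient of a direct sum of Weyl modules.

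Next, I would invoke \cite{FG6}, Theorem 1 (recalled just above), which asserts that $\fZ_\fg$ acts on $\BV_{\fg,\crit}^\la$ through the quotient $\fz^{\reg,\la}_\fg \simeq \Fun(\Op^{\reg,\la})$. Hence the scheme-theoretic support of $\BV_{\fg,\crit}^\la$ in $\Spec(\fZ_\fg)$ is exactly $\Op^{\reg,\la}$, which sits inside $\Op^{\on{unr},\la} \subset \Op^{\on{unr}}$ by the decomposition \eqref{decomp}. Combining this with the surjection above and the fact that support is preserved under quotients and (scheme-theoretically) joined under direct sums — the annihilator of a quotient contains that of the source, and $\on{Ann}\bigl(\bigoplus_i \CN_i\bigr) = \bigcap_i \on{Ann}(\CN_i)$ — I would conclude
$$\on{supp}(\CM) \subset \bigcup_{\la \in \Lambda^+} \Op^{\reg,\la} \subset \Op^{\on{unr}}.$$

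The main obstacle, I expect, is a careful treatment of supports for discrete modules over the topological, pro-finite-dimensional algebra $\fZ_\fg$. Specifically, one must verify that the scheme-theoretic support of the (possibly infinite) direct sum $\bigoplus_\la \BV_{\fg,\crit}^\la \otimes \Hom_{G[[t]]}(V^\la, \CM)$, viewed as a closed ind-subscheme of $\Spec(\fZ_\fg)$, really equals the disjoint union $\bigsqcup_\la \Op^{\reg,\la}$, rather than acquiring some infinitesimal thickening through the topology on $\fZ_\fg$. Once this formal point is settled, the three steps above assemble into a complete proof.
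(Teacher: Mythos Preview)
The paper does not prove this theorem at all: it is quoted verbatim from \cite{FG3}, Corollary~1.11, so there is no argument here to compare your proposal against. What I can do is assess your argument on its own merits.

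Your overall strategy is sound, but the justification you give for surjectivity of the evaluation map is incorrect. The category of algebraic $G[[t]]$-representations is \emph{not} semi-simple: the first congruence subgroup $K_1=\ker(G[[t]]\to G)$ is pro-unipotent and admits non-trivial extensions, so a $G[[t]]$-integrable module $\CM$ does not in general split as a direct sum of $V^\la$-isotypic pieces as a $G[[t]]$-module. It does so as a $G$-module, but a $G$-equivariant map $V^\la\to\CM$ need not be $G[[t]]$-equivariant, and hence need not induce a $\hg_\crit$-map $\Vl\to\CM$. So the sentence ``each of which is hit by the corresponding summand on the left'' is not justified.

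The repair is minor and preserves the shape of your argument. For any $v\in\CM$, the cyclic $G[[t]]$-submodule $W_v:=G[[t]]\cdot v$ is finite-dimensional, and the adjunction gives a $\hg_\crit$-map $\Ind^{\hg_\crit}_{\fg[[t]]\oplus\BC\mathbf{1}}(W_v)\to\CM$ whose image contains $v$; summing over $v$ yields a surjection $\bigoplus_v \Ind(W_v)\twoheadrightarrow\CM$. Each $W_v$ has a finite Jordan--H\"older filtration with subquotients $V^{\la_i}$, so $\Ind(W_v)$ carries a finite filtration with subquotients $\BV_{\fg,\crit}^{\la_i}$, and hence is supported on the finite union $\bigcup_i\Op^{\reg,\la_i}\subset\Op^{\on{unr}}$. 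Passing to the quotient then gives the result. Note that you only need the easy direction of the cited statement from \cite{FG6}, namely that the action of $\fZ_\fg$ on $\Vl$ factors through $\fz^{\reg,\la}_\fg$; the isomorphism $\fz^{\reg,\la}_\fg\simeq\End(\Vl)$ is irrelevant here. Finally, the ``main obstacle'' you anticipate concerning supports of infinite direct sums over the topological algebra $\fZ_\fg$ is a red herring: with the finite-filtration formulation above it never arises, and the genuine gap was the semi-simplicity claim.
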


Thus, every $G[[t]]$-integrable $\hg$-module $\CM$ splits as a direct
sum $\underset{\la}\bigoplus\, \CM^\la$, where $\CM^\la$ is supported
at $\Spec(\fZ_\fg^{\on{unr},\la})$ and has an increasing filtration 
whose sub-quotients are quotient modules of $\Vl$.

\ssec{Chiral differential operators}

Let $X$ be a smooth algebraic curve. We will work with Lie-* algebras
and chiral algebras on $X$ and with modules over them supported at a
fixed point $x \in X$ (see \cite{CHA} for the definitions). In this
paper all chiral algebras will come from vertex algebras, and we will
tacitly identify a chiral algebra with its vacuum module, i.e., its
fiber at any point of a curve equipped with a coordinate. In fact,
everything may be rephrased in terms of the corresponding vertex (Lie)
algebras and modules over them, but we will use the formalism of
chiral algebras for the sake of consistency with
\cite{FG1}--\cite{FG3}.

Recall from \cite{AG,GMS:hom} that for any level $\kappa$ (i.e., an
invariant bilinear form on $\g$) we have the chiral algebra of
differential operators (CADO), denoted by $\fD_{G,\kappa}^{\ch}$. It
comes equipped with two mutually commuting embeddings
\begin{equation}    \label{two actions}
\CA_{\fg,\kappa} \overset{{\mathfrak l}_{\fg}}\longrightarrow
\fD^{\on{\ch}}(G)_\kappa \overset{{\mathfrak r}_{\fg}}\longleftarrow
\CA_{\fg,\kappa'},
\end{equation}
where
$$
\kappa'=-\kappa+2\kappa_\crit.
$$

Recall that the fiber of $\CA_{\fg,\kappa}$ at $x$ is the vacuum Weyl
module $\BV_{\fg,\kappa}$ of level $\kappa$, and the fiber of
$\fD_{G,\crit}^{\ch}$ at $x$ with
$$
\Ind^{\hg_\kappa}_{\fg[[t]] \oplus \C {\mb
1}}(\CO_{G[[t]]}):=U(\hg_\kappa) \underset{U(\fg[[t]] \oplus \C {\mb
1})}\otimes \CO_{G[[t]]}.
$$
Here $t$ is a formal coordinate at $x$ and $\CO_{G[[t]]}$ is the
algebra of functions on the group $G[[t]]$, on which $\fg[[t]]$ acts
trivially and ${\mb 1}$ acts as the identity. This is a module over
$\ghat_\ka \oplus \ghat_{\ka'}$. The action of $\ghat_\ka$ on
$\fD^{\on{\ch}}(G)_{\kappa,x}$ corresponding to the left arrow in
\eqref{two actions} is the natural action on this induced module, and
the action corresponding to the right arrow in \eqref{two actions} was
constructed in \cite{AG,GMS:hom}. We will refer to the two actions as
the ``left'' and the ``right'' actions, respectively.

\ssec{CADO at the critical level}

We now specialize to the critical level $\ka=\ka_\crit$. Then $\kappa'
= \kappa_\crit$, and so both left and right actions of $\ghat$
correspond to the critical level. We will describe the structure of
$\fD^{\on{\ch}}(G)_{\crit,x}$ as a $\ghat_\crit$-bimodule.
From now on, when there is no confusion, we will skip the subscript 
$x$ when describing the fiber of the chiral algebra at $x$.

\medskip

Let $\fz_\fg$ denote the center of $\CA_{\fg,\crit}$ (note that 
$\fz_\fg$ identifies with $\fz_\fg^{\reg,0}$). The following has
been established in \cite{FG1}:

\begin{lem}  \label{two embeddings}
The two embeddings
$$\fl,\fr:\fz_\fg \rightrightarrows \fD_{G,\crit}^{\ch}$$
differ by the automorphism of $\fz_\fg$, induced by 
Cartan involution $\tau$ of $\fg$.
\end{lem}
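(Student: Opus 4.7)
\emph{Proof plan.} The plan is to exploit the inversion automorphism on $G$. The map $g\mapsto g^{-1}$ lifts, by functoriality of the CADO construction of \cite{AG,GMS:hom}, to an involution $\iota$ of the chiral algebra $\fD_{G,\crit}^{\ch}$ that interchanges the two $\hg_\crit$-actions of \eqref{two actions}. At the level of currents, $\iota$ satisfies $\iota\circ {\mathfrak l}_{\fg} = {\mathfrak r}_{\fg}\circ\sigma$, where $\sigma$ is the principal anti-involution $X\otimes t^n\mapsto -X\otimes t^n$ on $\hg_\crit$. Since $\fz_\fg\subset \CA_{\fg,\crit}$ is commutative, the restriction $\sigma|_{\fz_\fg}$ is actually an algebra automorphism, and $\iota$ acts as the identity on the subalgebra of the chiral center of $\fD_{G,\crit}^{\ch}$ where $\fl(\fz_\fg)$ and $\fr(\fz_\fg)$ both sit. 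Thus the lemma reduces to identifying $\sigma|_{\fz_\fg}$ with the automorphism $\tau_*$ induced on $\fz_\fg$ by the Cartan involution of $\fg$.

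To prove $\sigma|_{\fz_\fg}=\tau_*$, I would first pass to the classical limit via the chiral PBW filtration on $\CA_{\fg,\crit}$. The associated graded of $\fz_\fg$ identifies with the algebra of $\fg[[t]]$-invariants in $\Fun(\Jet(\fg^*))$. Both $\sigma$ and $\tau$ restrict to $-\id$ on the Cartan subalgebra $\fh$ (for $\tau$ by definition, for $\sigma$ tautologically), hence they induce the same automorphism of $\Sym(\fh^*)^W$, namely $(-1)^d$ on degree-$d$ Weyl invariants. The Chevalley isomorphism $\Sym(\fg^*)^G\simeq \Sym(\fh^*)^W$, extended to jets, then gives agreement of $\sigma$ and $\tau$ on $\gr(\fz_\fg)$.

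To upgrade the classical equality to an equality on $\fz_\fg$ itself, one invokes the Feigin--Frenkel theorem describing $\fz_\fg$ as a polynomial algebra generated by the $\partial_t$-translates of a distinguished system of Sugawara-type generators indexed by the exponents of $\fg$. Both $\sigma|_{\fz_\fg}$ and $\tau_*$ respect the PBW filtration; since their action on the top symbol of each generator is pinned down to the same sign by the classical computation, and since the eigenspaces of an involution on a filtered commutative algebra are recovered from those on the symbol, they must coincide on each generator, hence on all of $\fz_\fg$. Combining with the first step yields $\fr = \fl\circ\tau_*$, as claimed. The main obstacle is the last step: controlling the quantum corrections well enough to promote the associated-graded identity to a strict one; this requires a careful use of the explicit form of the Feigin--Frenkel generators rather than only their symbols.
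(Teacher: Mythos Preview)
The paper does not give its own proof of this lemma; it simply records that the statement was established in \cite{FG1}. So there is no argument in the present paper to compare your proposal against.

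Regarding your approach itself, the inversion idea is natural, but two of the steps are not justified.

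First, the claim that $\iota$ acts as the identity on the chiral center of $\fD_{G,\crit}^{\ch}$ is asserted without argument. One can verify it at the associated-graded level: the cotangent lift of inversion on $T^*G$ swaps the two moment maps $\mu_L,\mu_R$, and for $P\in\Sym(\fg)^G$ one has $P\circ\mu_L=P\circ\mu_R$, so the Poisson center is fixed pointwise. But you still need to lift this to the chiral algebra, and you have not done so. Without it, the relation $\iota\circ\fl=\fr\circ\sigma$ only says that $\fl$ and $\fr$ differ on $\fz_\fg$ by $\sigma$ composed with whatever automorphism $\iota$ induces on the center, which you have not identified.

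Second, and more seriously, your argument that $\sigma|_{\fz_\fg}=\tau_*$ is invalid as written. The principle you invoke, that ``the eigenspaces of an involution on a filtered commutative algebra are recovered from those on the symbol,'' is false: on $\BC[x]$ with the degree filtration, the involutions $x\mapsto -x$ and $x\mapsto -x+1$ induce the same map on $\gr$ but are different. Knowing that two involutions of $\fz_\fg$ agree on $\gr(\fz_\fg)$ does not force them to agree on $\fz_\fg$. To close this gap you would need genuinely more input: for instance, working in each conformal weight with the explicit Feigin--Frenkel generators and checking there is no room for lower-order corrections, or computing both involutions directly through the Miura/free-field realization. You correctly flag this as the main obstacle, but the sketch you give does not overcome it.
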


\medskip

As a bimodule over $\hg_\crit$, $\fD_{G,\crit}^{\ch}$ is
$G[[t]]$-integrable with respect to both actions. By \thmref{support},
its support over $\Spec(\fZ_\fg)$ is contained in
$\Spec(\fZ_\fg^{\on{unr}})$ (note that by \lemref{two embeddings},
the two actions of $\fZ_\fg$ on $\fD_{G,\crit}^{\ch}$ differ by
$\tau$). Hence, we have a direct sum decomposition of
$\fD_{G,\crit}^{\ch}$ as a $\hg_\crit$-bimodule:
$$\fD_{G,\crit}^{\ch}\simeq \bigoplus_{\la \in \Lambda^+}\,
\fD_{G,\crit}^{\ch,\la}$$
where $\fD_{G,\crit}^{\ch,\la}$ is the summand supported at
$\Op^{\on{unr},\la} = \Spec(\fZ_\fg^{\on{unr},\la})$ (see formula
\eqref{decomp}).

\medskip

Recall that $\CO_{G[[t]]}$ denotes the algebra of functions on
$G[[t]]$. It has a natural structure of commutative chiral algebra,
and as such it is a chiral subalgebra of $\fD_{G,\crit}^{\ch}$. The
map
$$\CO_{G[[t]]}\to \fD_{G,\crit}^{\ch}$$
respects the bimodule structure with respect to $\fg[[t]]
\subset \ghat_\crit$.

\medskip

For $\la\in \Lambda^+$ we have a natural map
$$V^\la\otimes V^{\tau(\la)}\to \CO_G\hookrightarrow \CO_{G[[t]]},$$
compatible with the action of $\fg[[t]]\oplus \fg[[t]]$. Inducing, we
obtain a map of bimodules over $\hg_\crit$:
$$\Vl\otimes \BV_{\fg,\crit}^{\tau(\la)}\to \fD_{G,\crit}^{\ch,\la}.$$

{}From \lemref{two embeddings} we obtain:

\begin{lem}  
The above map factors through a map
\begin{equation} \label{zero-th term}
\Vl\underset{\fz^{\reg,\la}_\fg}\otimes \BV_{\fg,\crit}^{\tau(\la)}\to 
\fD_{G,\crit}^{\ch,\la}.
\end{equation}
\end{lem}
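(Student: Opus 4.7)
The plan is to deduce the factorization directly from \lemref{two embeddings} combined with the fact (recalled in the preceding subsection) that the central action on the Weyl modules factors through the quotients $\fz^{\reg,\la}_\fg$. The key observation is that the tensor product $\Vl \underset{\fz^{\reg,\la}_\fg}\otimes \BV_{\fg,\crit}^{\tau(\la)}$ is well-defined once we use the isomorphism $\tau: \fz^{\reg,\la}_\fg \xrightarrow{\sim} \fz^{\reg,\tau(\la)}_\fg$ to transport the natural $\fz^{\reg,\tau(\la)}_\fg$-module structure on $\BV_{\fg,\crit}^{\tau(\la)}$ to a $\fz^{\reg,\la}_\fg$-module structure. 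Then $\Vl$ carries a $\fz^{\reg,\la}_\fg$-action from the left and $\BV_{\fg,\crit}^{\tau(\la)}$ carries one (through $\tau$) from the right, and we must check they match under the given map.

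Concretely, I would pick $v \in \Vl$, $w \in \BV_{\fg,\crit}^{\tau(\la)}$, $z \in \fz^{\reg,\la}_\fg$, and denote the image of $v \otimes w$ under the bimodule map simply by $v\cdot w \in \fD_{G,\crit}^{\ch,\la}$. Because the map of the preceding paragraph is a map of $\hg_\crit$-bimodules, one has
$$
(z\cdot v)\cdot w \;=\; \fl(z)\,(v\cdot w), \qquad v\cdot(\tau(z)\cdot w) \;=\; (v\cdot w)\,\fr(\tau(z)).
$$
Now \lemref{two embeddings} asserts precisely that $\fl$ and $\fr$, viewed as maps from $\fz_\fg$ into $\fD_{G,\crit}^{\ch}$, are interchanged by the Cartan involution $\tau$, i.e.\ $\fl(z)=\fr(\tau(z))$. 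Hence the two displayed elements coincide in $\fD_{G,\crit}^{\ch,\la}$, and the map factors through the quotient defining $\Vl \underset{\fz^{\reg,\la}_\fg}\otimes \BV_{\fg,\crit}^{\tau(\la)}$.

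Before running this argument I would briefly justify that the image of the induced map indeed lies in the $\la$-summand $\fD_{G,\crit}^{\ch,\la}$: the source is generated by a $G[[t]]$-integrable subspace (with respect to both actions), so by \thmref{support} its image is supported on $\Op^{\on{unr}}$; since the left action of $\fZ_\fg$ on $\Vl$ factors through $\fz^{\reg,\la}_\fg$, only the component $\Op^{\on{unr},\la}$ contributes.

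I do not anticipate any genuine obstacle here: once one has \lemref{two embeddings} and the factorization of the central character on $\Vl$ through $\fz^{\reg,\la}_\fg$, the statement is essentially a formal consequence. The only point requiring care is bookkeeping, namely the convention that the $\fz^{\reg,\la}_\fg$-action on $\BV_{\fg,\crit}^{\tau(\la)}$ used to form the relative tensor product is the one obtained by transport via $\tau$, as dictated by \lemref{two embeddings}.
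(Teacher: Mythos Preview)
Your proposal is correct and follows exactly the approach indicated in the paper: the authors simply write ``From \lemref{two embeddings} we obtain'' before stating the lemma, and your argument spells out precisely why the identity $\fl(z)=\fr(\tau(z))$ forces the relation $(z\cdot v)\cdot w = v\cdot(\tau(z)\cdot w)$ needed for the factorization. Your bookkeeping remark about transporting the $\fz^{\reg,\la}_\fg$-action on $\BV_{\fg,\crit}^{\tau(\la)}$ via $\tau$ is exactly the convention implicit in the paper.
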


\ssec{A description of the CADO}

Recall that $\bI^\la$ denotes the ideal of $\Spec(\fz_\fg^{\reg,\la})$
in $\Spec(\fZ_\fg^{\on{unr},\la})$. Consider the canonical
increasing filtration on $\fD_{G,\crit}^{\ch,\la}$ numbered by
$i=0,1,...$ with $F^i(\fD_{G,\crit}^{\ch,\la})$ being the $\hg_\crit$
sub-bimodule, annihilated by the $i+1$-st power of the ideal
$\bI^\la$. By construction, the image of the map \eqref{zero-th term}
belongs to $F^0(\fD_{G,\crit}^{\ch,\la})$.

\medskip

We are now ready to formulate the main result of this section:

\begin{thm}  \label{structure}  \hfill

\smallskip

\noindent{\em (1)}
The map \eqref{zero-th term} defines an isomorphism
$$\Vl\underset{\fz^{\reg,\la}_\fg}\otimes
\BV_{\fg,\crit}^{\tau(\la)}\simeq F^0(\fD_{G,\crit}^{\ch,\la}).$$

\smallskip

\noindent{\em (2)}
The canonical maps
$$(\bI^\la)^n/(\bI^\la)^{n+1}\underset{\fz^{\reg,\la}_\fg}\otimes
\on{gr}^n(\fD_{G,\crit}^{\ch,\la})\to
\on{gr}^0(\fD_{G,\crit}^{\ch,\la})$$ give rise to isomorphisms
\begin{equation}    \label{consecutive}
\on{gr}^n(\fD_{G,\crit}^{\ch,\la})\simeq
\on{gr}^0(\fD_{G,\crit}^{\ch,\la}) \underset{\fz^{\reg,\la}_\fg}\otimes
\on{Sym}^n_{\fz^{\reg,\la}_\fg}(\CN^\la_{\reg/\on{unr}})
\end{equation}
of $\ghat_\crit$-bimodules, where $\CN^\la_{\reg/\on{unr}}$ is the
normal bundle to $\Op^{\reg,\la}$ in $\Op^{\on{unr},\la}$.
\end{thm}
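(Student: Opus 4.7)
The plan is to prove both parts by exploiting the bimodule structure of $\fD_{G,\crit}^{\ch,\la}$ together with the realization of $\fD_{G,\crit}^{\ch}$ as an induced module from $\CO_{G[[t]]}$ with respect to the left $\hg_\crit$-action. Left-multiplication by $\bI^\la$ through the center sends $F^n$ into $F^{n-1}$, so $\gr^\bullet(\fD_{G,\crit}^{\ch,\la})$ is naturally a graded module over $\gr^\bullet(\fZ_\fg^{\on{unr},\la}) \simeq \Sym^\bullet_{\fz^{\reg,\la}_\fg}(\bI^\la/(\bI^\la)^2)$, which is the structural observation behind part~(2).

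For part~(1), I first observe that $F^0$, being killed by $\bI^\la$ on the left and (by \lemref{two embeddings}) by the corresponding ideal $\bI^{\tau(\la)}$ on the right, has both central actions factoring through $\fz^{\reg,\la}_\fg$ (identified via $\tau$). Any $G[[t]]$-bi-equivariant $\hg_\crit$-bimodule with both central actions supported on $\Op^{\reg,\la}$ admits, by the discussion following \thmref{support}, a filtration with $\Vl$-subquotients on the left and $\BV_{\fg,\crit}^{\tau(\la)}$-subquotients on the right; together with the freeness of $\Vl$ over $\fz^{\reg,\la}_\fg$ from \cite{FG6}, this reduces the computation of $F^0$ to determining a multiplicity space over $\fz^{\reg,\la}_\fg$. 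Using the induced-module description and Frobenius reciprocity, this multiplicity space matches the $V^\la \otimes V^{\tau(\la)}$-component inside $\CO_G \subset \CO_{G[[t]]}$ singled out by the Peter--Weyl theorem, which is a free rank-one $\fz^{\reg,\la}_\fg$-module; hence \eqref{zero-th term} is an isomorphism.

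For part~(2), the statement can be interpreted as saying that $\fD_{G,\crit}^{\ch,\la}$ behaves like the module of formal distributions on $\Op^{\on{unr},\la}$ supported along $\Op^{\reg,\la}$ with values in $F^0$: such a distribution module has associated graded $F^0 \otimes_{\fz^{\reg,\la}_\fg} \Sym^n(\CN^\la_{\reg/\on{unr}})$ in degree $n$. To realize this structure, one uses the Poisson structure on $\fZ_\fg$, which identifies $\CN^\la_{\reg/\on{unr}}$ with $\Omega^1(\fz^{\reg,\la}_\fg)$, together with the interpretation of $\fD_{G,\crit}^{\ch}$ as the critical-level limit of a family at non-critical levels: at level $\kappa \neq \kappa_\crit$ the commutator $[\fl(x),\fr(y)]$ for $x,y$ in the level-$\kappa$ center is no longer zero but proportional to $\kappa-\kappa_\crit$, and dividing out yields the desired derivations from $\bI^\la$ into $\gr^\bullet(\fD_{G,\crit}^{\ch,\la})$ providing an inverse to the canonical multiplication map. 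The symmetric-algebra structure is inherited from the isomorphism $\gr^\bullet(\fZ_\fg^{\on{unr},\la}) \simeq \Sym^\bullet_{\fz^{\reg,\la}_\fg}(\bI^\la/(\bI^\la)^2)$.

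The main obstacle is establishing that the filtration $F^\bullet$ is exhaustive and that the candidate inverse is in fact two-sided, which amounts to a dimension or character count. Using the $\Z_{\geq 0}$-conformal grading on $\fD_{G,\crit}^{\ch}$ from its construction in \cite{AG,GMS:hom}, one can compare conformal characters on both sides of the claimed isomorphism in each degree. Alternatively, restricting to the generic point of $\Op^{\reg,\la}$ where $\Vl$ is irreducible reduces the identification at the level of graded pieces to the classical Peter--Weyl decomposition of $\CO_G$, in which $V^\la \otimes V^{\tau(\la)}$ appears with multiplicity one.
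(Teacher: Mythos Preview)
Your proposal sketches the right intuitions but differs substantially from the paper's argument and leaves genuine gaps.

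For part~(2), the paper does not construct the inverse by hand via a Poisson/deformation argument. Instead, it observes that $\fD_{G,\crit}^{\ch,\la}$ is a chiral module over $\fD_{G,\crit}^{\ch,0}$, hence over the chiral algebroid $\CA^{\ren,\tau}_{\fg,\crit}$ of \cite{FG1}, and then applies a version of Kashiwara's theorem (\propref{Kashiwara}, proved in \cite{FG1}) as a black box: for any such module supported on $\Spec(\fZ_\fg^{\on{unr},\la})$, the canonical filtration by powers of $\bI^\la$ automatically has associated graded of the required form. Your deformation sketch (``dividing out yields the desired derivations \ldots\ providing an inverse'') is, in spirit, the idea behind how $\CA^{\ren,\tau}_{\fg,\crit}$ is built and how Kashiwara's theorem is proved, but as written it is not a proof: you have not explained why the rescaled commutators descend to well-defined operators on $\gr^\bullet$, why they commute among themselves, or why they actually invert the multiplication map rather than merely providing a one-sided section. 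The paper's route buys you all of this by citing an established result.

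For part~(1), the paper's argument is also different from yours. It first proves \emph{injectivity} of \eqref{zero-th term} by a Hom computation: if the kernel $K$ were nonzero there would be a nonzero map $\Vl \to K$, contradicting the fact (from \cite{AG} and \cite{FG6}) that the composite $\BV_{\fg,\crit}^{\tau(\la)} \to \Hom_{\hg_\crit}(\Vl,\Vl\underset{\fz^{\reg,\la}_\fg}\otimes\BV_{\fg,\crit}^{\tau(\la)}) \to \Hom_{\hg_\crit}(\Vl,\fD_{G,\crit}^{\ch,\la})$ is already an isomorphism. Then surjectivity, together with exhaustion of the filtration, is obtained by a $G\times G\times\BG_m$ character count, deforming to generic level where the decomposition \eqref{generic} is available. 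Your ``multiplicity space over $\fz^{\reg,\la}_\fg$'' argument is vaguer: knowing that $F^0$ has a filtration with subquotients that are \emph{quotients} of $\Vl$ does not by itself pin down $F^0$ as $\Vl\underset{\fz^{\reg,\la}_\fg}\otimes\BV_{\fg,\crit}^{\tau(\la)}$, and your Frobenius reciprocity/Peter--Weyl step computes $\Hom_{G[[t]]}(V^\la,\fD_{G,\crit}^{\ch})$ for the full module, not for $F^0$ alone. The character count you mention at the end is indeed what the paper does, but it requires part~(2) and the injectivity of \eqref{zero-th term} as inputs, neither of which your outline has fully established.
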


The above theorem should be contrasted with the following:

\begin{lem}
For a generic $\ka$ (i.e., such that $\ka/\ka_c$ is not a rational
number) we have an isomorphism
\begin{equation}    \label{generic}
\fD^{\on{\ch}}_{G,\ka} \simeq \bigoplus_{\la \in \Lambda^+} \BV_{\fg,\ka}^\la
\otimes \BV_{\fg,\ka'}^{\tau(\la)},
\end{equation}
of $\ghat_\ka \oplus \ghat_{\ka'}$ modules.
\end{lem}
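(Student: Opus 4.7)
The plan is to construct the map $\phi$ from the right-hand side of \eqref{generic} to $\fD^{\ch}_{G,\ka}$ exactly as in the critical-level case, then show it is an isomorphism using generic-level irreducibility of Weyl modules combined with a Peter--Weyl-style multiplicity computation.

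First I would construct $\phi$ following the same recipe as the map preceding \eqref{zero-th term}. The Peter--Weyl decomposition
$$
\CO_G\simeq \bigoplus_{\la\in \Lambda^+} V^\la\otimes V^{\tau(\la)}
$$
is $G\times G$-equivariant, and composing with the inclusion $\CO_G\hookrightarrow \CO_{G[[t]]}\subset \fD^{\ch}_{G,\ka}$ and extending through the two commuting $\hg$-actions (at levels $\ka$ and $\ka'=2\ka_c-\ka$) on $\fD^{\ch}_{G,\ka}$ produces an $\hg_\ka\oplus\hg_{\ka'}$-bimodule map
$$
\phi\colon \bigoplus_{\la\in \Lambda^+} \BV_{\fg,\ka}^\la\otimes \BV_{\fg,\ka'}^{\tau(\la)}\ \longrightarrow\ \fD^{\ch}_{G,\ka}.
$$

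To verify $\phi$ is an isomorphism, I would use that $\ka/\ka_c\notin\BQ$ forces $\ka'/\ka_c\notin\BQ$ as well, so by Kac--Kazhdan every Weyl module $\BV_{\fg,\ka}^\la$ (and $\BV_{\fg,\ka'}^\mu$) is irreducible and the category of $G[[t]]$-integrable $\hg_\ka$-modules is semisimple with these as its simples. Consequently the category of $(G[[t]]\times G[[t]])$-integrable $\hg_\ka\oplus \hg_{\ka'}$-bimodules is semisimple with simples $\BV_{\fg,\ka}^\la\otimes \BV_{\fg,\ka'}^\mu$, and since $\fD^{\ch}_{G,\ka}$ is $G[[t]]$-integrable under both commuting actions it decomposes as $\bigoplus_{\la,\mu}(\BV_{\fg,\ka}^\la\otimes \BV_{\fg,\ka'}^\mu)^{\oplus m_{\la,\mu}}$. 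By Frobenius reciprocity applied on both sides, and using that at generic $\ka$ the $t\fg[[t]]$-invariants in $\BV_{\fg,\ka}^{\la'}$ coincide with the top copy $V^{\la'}$, the multiplicity equals
$$
m_{\la,\mu}=\dim \Hom_{\fg\oplus \fg}\bigl(V^\la\otimes V^\mu,\,(\fD^{\ch}_{G,\ka})^{t\fg[[t]]\oplus t\fg[[t]]}\bigr)=\dim \Hom_{\fg\oplus \fg}(V^\la\otimes V^\mu,\CO_G)=\delta_{\mu,\tau(\la)}
$$
by the classical Peter--Weyl. This matches the LHS of $\phi$, and since $\phi$ is visibly nonzero on each summand (restricting to the Peter--Weyl inclusion), it is an isomorphism.

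The main obstacle will be pinning down the ``double invariant'' subspace $(\fD^{\ch}_{G,\ka})^{t\fg[[t]]\oplus t\fg[[t]]}=\CO_G$. The inclusion $\supset$ is immediate from the Peter--Weyl embedding, but $\subset$ requires a PBW calculation on the induced-module side, exploiting the non-degeneracy of $\ka$ and $\ka'$ at generic level to rule out new $t\fg[[t]]$-invariants appearing in positive conformal weight: an explicit analysis of the commutators $[X_m,Y_{-n}]$ shows that $t\fg[[t]]$-invariance on each side forces a vector to lie in $\CO_{G[[t]]}$, after which taking $t\fg[[t]]$-invariants under the appropriate translation action reduces to $\CO_G=\CO_{G[[t]]/G_1}$ using that $G_1=\ker(G[[t]]\to G)$ is normal. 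Once this identification is secured, the remainder of the argument is simply the classical Peter--Weyl theorem for $\CO_G$.
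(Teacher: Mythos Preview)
Your approach is correct but more elaborate than the paper's. The paper constructs the same map $\phi$, observes that each summand $\BV_{\fg,\ka}^\la\otimes \BV_{\fg,\ka'}^{\tau(\la)}$ is irreducible at generic level (so $\phi$ is injective), and then finishes by the ``obvious'' equality of $G\times G\times\BG_m$-characters on the two sides---which follows immediately from PBW, since both $\fD^{\ch}_{G,\ka}\simeq U(t^{-1}\fg[t^{-1}])\otimes \CO_{G[[t]]}$ and $\BV_{\fg,\ka}^\la\simeq U(t^{-1}\fg[t^{-1}])\otimes V^\la$ have level-independent characters, and Peter--Weyl for $\CO_G$ together with $\CO_{G[[t]]}\simeq \CO_G\otimes \CO_{G_1}$ matches them. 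Your route through semisimplicity of the $G[[t]]$-integrable category and the multiplicity computation via Frobenius reciprocity is sound, but it forces you to identify the double $t\fg[[t]]$-invariants of $\fD^{\ch}_{G,\ka}$ with $\CO_G$; this is the genuine extra work you flag as the ``main obstacle,'' and it implicitly uses the Sugawara operator (to pin down $(\BV^0_{\fg,\ka})^{t\fg[[t]]}=\C$) plus knowledge of how the right $\fg[[t]]$-action restricts to $\CO_{G[[t]]}$. The paper's character argument bypasses all of this: once injectivity is in hand, an injective map between objects of equal (finite in each graded piece) character is automatically an isomorphism. Your method has the advantage of being more structural---it would adapt to situations where a direct character count is unavailable---but here the paper's is decidedly shorter.
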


\begin{proof}
For any level $\ka$ we have a canonical non-zero homomorphism of
$\ghat_\ka \oplus \ghat_{\ka'}$ modules
\begin{equation}    \label{any ka}
\BV_{\fg,\ka}^\la \otimes \BV_{\fg,\ka'}^{\tau(\la)} \to
\fD^{\on{\ch}}_{G,\ka}.
\end{equation}
If $\ka$ satisfies the conditions of the lemma, then both
$\BV_{\fg,\ka}^\la$ and $\BV_{\fg,\ka'}^{\tau(\la)}$ are irreducible
modules. Therefore the above maps are injective. The assertion of the
lemma then follows from the obvious fact that the characters of the
two sides of \eqref{generic} are equal to each other.
\end{proof}

For special values of $\ka$, when $\ka/\ka_c \in {\mathbb Q}$, the
modules $\BV_{\fg,\ka}^\la$ and $\BV_{\fg,\ka'}^{\tau(\la)}$ may become
reducible, and so the structure of $\fD^{\on{\ch}}_{G,\ka}$ may
become more complicated. \thmref{structure} describes what happens at
the critical level $\ka=\ka_\crit$. In this case the image of the
homomorphism \eqref{any ka} is equal to $\BV_{\fg,\crit}^\la
\underset{\zz_\la}\otimes \BV_{\fg,\crit}^{\tau(\la)}$. In other
words, we observe the collapse of the degrees of freedom corresponding
to $\zz_\la$. But these degrees of freedom are restored by the second
factor in \eqref{consecutive}.

\ssec{Proof of part (2)}

We shall first prove part (2) of \thmref{structure}. Recall the
chiral algebroid $\CA^{\ren,\tau}_{\fg,\crit}$ of \cite{FG1}, Section
4, whose chiral enveloping algebra is $\fD_{G,\crit}^{\ch,0}$, by Lemma
9.7 of {\it loc. cit.} (note that the assertion of \thmref{structure}
for $\la=0$ follows in fact from this isomorphism and Lemma 7.4 of
\cite{FG1}).

\medskip

A version of Kashiwara's theorem
proved in Section 7 of {\it loc. cit.} implies the following:

\begin{prop}  \label{Kashiwara}
Let $\CM$ be a chiral $\CA^{\ren,\tau}_{\fg,\crit}$-module, whose support
over $\Spec(\fZ_\fg)$ is contained in $\Spec(\fZ_\fg^{\on{unr},\la})$.
Let $F^i(\CM)$, $i=1,2,...$ be the canonical increasing filtration on
$\CM$ by the powers of $\bI^\la$. Then

\smallskip

\noindent{\em(a)}
$R^i(\iota^\la)^!(\CM)=0$ for $i>0$ and $R^0(\iota^\la)^!(\CM)\simeq
F^0(\CM)$.

\smallskip

\noindent{\em(b)} The canonical maps
$$(\bI^\la)^n/(\bI^\la)^{n+1}\underset{\fz^{\reg,\la}_\fg}\otimes
\on{gr}^n(\CM)\to \on{gr}^0(\CM)$$ give rise to isomorphisms
$$\on{gr}^n(\CM)\simeq \on{gr}^0(\CM)
\underset{\fz^{\reg,\la}_\fg}\otimes 
\on{Sym}^n_{\fz^{\reg,\la}_\fg}(\CN^\la_{\reg/\on{unr}}).$$
\end{prop}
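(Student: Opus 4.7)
The plan is to deduce this result directly from the Kashiwara-type theorem proved in Section~7 of \cite{FG1}, by verifying that a chiral $\CA^{\ren,\tau}_{\fg,\crit}$-module $\CM$ supported (scheme-theoretically) on $\Spec(\fZ_\fg^{\on{unr},\la})$ meets its hypotheses. The key structural input is that $\fZ_\fg$ lies in the Lie-* center of $\CA^{\ren,\tau}_{\fg,\crit}$, and that the Poisson structure on $\fZ_\fg$ (restricted to $\Spec(\fZ_\fg^{\on{unr},\la})$) identifies the conormal directions $\bI^\la/(\bI^\la)^2$ with a space of derivations of $\fz_\fg^{\reg,\la}$ realized inside the algebroid. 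This is precisely the Poisson datum that fuels the Kashiwara-type statement of \emph{loc.~cit.}

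First I would set up the filtration $F^i(\CM)$ and note that it is exhaustive: since $\CM$ is supported on $\Spec(\fZ_\fg^{\on{unr},\la})$, which is a formal thickening of its reduced subscheme $\Spec(\fz_\fg^{\reg,\la})$ (so $\bI^\la$ acts topologically nilpotently on $\CM$), every local section is annihilated by some power of $\bI^\la$. Part (a) then follows directly: by the definition of $(\iota^\la)^!$ as $\RHom_{\fZ_\fg^{\on{unr},\la}}(\fz_\fg^{\reg,\la}, -)$, the vanishing of the higher Exts reduces to a local Koszul computation using that $\Spec(\fz_\fg^{\reg,\la})$ is cut out in $\Spec(\fZ_\fg^{\on{unr},\la})$ by a regular sequence (this regularity is ensured by the fact that $\CN^\la_{\reg/\on{unr}}$ is locally free, as recorded above using \cite{FG2}, Sect.~4.6). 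Then $R^0(\iota^\la)^!(\CM)$ is identified tautologically with the annihilator of $\bI^\la$, which is $F^0(\CM)$.

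For part (b), the Poisson structure on $\fZ_\fg$ gives, for every element $\xi\in\bI^\la$, a derivation of $\fz_\fg^{\reg,\la}$ depending only on its class in $\bI^\la/(\bI^\la)^2$, and by the definition of the algebroid $\CA^{\ren,\tau}_{\fg,\crit}$ (cf.~\cite{FG1}, Sect.~4) this derivation is realized by the bracket action on $\CM$. Consequently each $\xi\in\bI^\la$ induces a map $F^{n-1}(\CM)\to F^n(\CM)$ which on associated graded pieces depends only on the image of $\xi$ in $\CN^\la_{\reg/\on{unr}}$, giving the canonical maps of the statement. To upgrade surjectivity of these maps to an isomorphism with the symmetric power, I would invoke the Kashiwara theorem of \cite{FG1}, Sect.~7, which asserts precisely that the PBW-type map
\[
F^0(\CM) \underset{\fz^{\reg,\la}_\fg}\otimes
\on{Sym}^n_{\fz^{\reg,\la}_\fg}(\CN^\la_{\reg/\on{unr}})
\longrightarrow \on{gr}^n(\CM)
\]
is an isomorphism; this is the analogue of the identification of a $D$-module supported on a smooth subvariety with a $\Sym$-module over the normal bundle.

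The main obstacle, and the only place where genuine work is hidden, is in the passage from the abstract Poisson/algebroid datum to the actual freeness of the symmetric-power action on the graded pieces. One must know that the chiral enveloping algebra structure produces no unexpected relations among the normal derivations, so that they act by a free symmetric algebra; this is exactly what the Kashiwara-type statement of \cite{FG1}, Sect.~7, is engineered to provide, and I would simply quote it once the hypotheses—supports, local freeness of $\CN^\la_{\reg/\on{unr}}$, and the identification of the conormal with the Poisson-derivation subspace of the algebroid—have been checked.
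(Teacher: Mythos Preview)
Your overall strategy---reduce to the Kashiwara-type theorem of \cite{FG1}, Sect.~7---is exactly what the paper does; in fact the paper says nothing more than ``a version of Kashiwara's theorem proved in Section~7 of {\it loc.~cit.} implies the following'' and then states the proposition. So at the level of approach you are aligned with the paper.

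There is, however, a genuine gap in your treatment of part (a). You claim that the vanishing of $R^i(\iota^\la)^!(\CM)$ for $i>0$ ``reduces to a local Koszul computation'' using only that $\Spec(\fz_\fg^{\reg,\la})$ is cut out of $\Spec(\fZ_\fg^{\on{unr},\la})$ by a regular sequence. This is false for an arbitrary module set-theoretically supported on the subscheme: already for $R=\BC[x]$, $Z=\{x=0\}$, and $\CM=R/x^2$ one computes $R^1(\iota)^!(\CM)=\Ext^1_R(\BC,\CM)\neq 0$. The Koszul resolution lets you \emph{compute} $R\Hom(\fz_\fg^{\reg,\la},\CM)$, but it does not force the higher terms to vanish; what forces the vanishing is precisely the algebroid (i.e., ``$D$-module'') structure on $\CM$, which your argument for (a) never invokes.

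In other words, part (a) is not an elementary commutative-algebra fact separate from the Kashiwara theorem---it is part of the Kashiwara package itself. The correct route is the one you already use for (b): the chiral $\CA^{\ren,\tau}_{\fg,\crit}$-module structure makes the normal directions act freely, so that $\CM$ is (non-canonically) identified with $(\iota^\la_{\reg/\on{unr}})_*\bigl(\on{gr}^0(\CM)\bigr)$, and then $R(\iota^\la)^!$ applied to a pushforward is concentrated in degree~$0$. Indeed, this is how the paper later uses the proposition (see the proof of \propref{calc on D}): the vanishing of $R^i(\iota^\la)^!$ is deduced \emph{from} the description of the associated graded in (b), not independently of it. So you should either quote \cite{FG1}, Sect.~7, for (a) and (b) together, or derive (a) from (b) as above; the stand-alone Koszul argument does not work.
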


We apply this proposition to $\fD_{G,\crit}^{\ch,\la}$, which is a
chiral module over $\fD_{G,\crit}^{\ch,0}$, and hence over
$\CA^{\ren,\tau}_{\fg,\crit}$, and the assertion of point (2) of
\thmref{structure} follows.

\ssec{Proof of part (1)}

To prove part (1) of \thmref{structure}, let us first show that the
map \eqref{zero-th term} is injective. Indeed, let $K$ denote its
kernel; this is a bimodule over $\hg_\crit$, supported at
$\Spec(\fZ_\fg^{\on{unr},\la})$ and $G[[t]]$-integrable with respect
to both actions. Hence, if $K\neq 0$, there exists a non-zero map of
$\Vl\to K$ of $\hg_\crit$-modules, with respect to the left action.

\medskip

However, we claim that the map
\begin{equation} \label{zero-th term, inv}
\Hom_{\hg_\crit} (\Vl,\Vl\underset{\fz^{\reg,\la}_\fg}\otimes
\BV_{\fg,\crit}^{\tau(\la)})\to
\Hom_{\hg_\crit}(\Vl,\fD_{G,\crit}^{\ch,\la})
\end{equation}
is injective, and in fact an isomorphism. This would lead to
a contradiction, implying that $K=0$.

\medskip

To show that \eqref{zero-th term, inv} is an isomorphism, consider the
composition
\begin{equation} \label{zero-th term, inv, comp}
\BV_{\fg,\crit}^{\tau(\la)}\to
\Hom_{\hg_\crit}(\Vl,\Vl\underset{\fz^{\reg,\la}_\fg}\otimes
\BV_{\fg,\crit}^{\tau(\la)})\to
\Hom_{\hg_\crit}(\Vl,\fD_{G,\crit}^{\ch,\la}).
\end{equation}

We claim that the first arrow in \eqref{zero-th term, inv, comp}
is an isomorphism. Indeed, since  
$\BV_{\fg,\crit}^{\tau(\la)}$ is flat over $\fz^{\reg,\la}_\fg$, we have
$$\Hom_{\hg_\crit}
(\Vl,\Vl\underset{\fz^{\reg,\la}_\fg}\otimes \BV_{\fg,\crit}^{\tau(\la)})\simeq
\End_{\hg_\crit}(\Vl)\underset{\fz^{\reg,\la}_\fg}\otimes
\BV_{\fg,\crit}^{\tau(\la)},$$ and by the main result of \cite{FG6},
the natural map
$$\fz^{\reg,\la}_\fg\to \End_{\hg_\crit}(\Vl)$$
is an isomorphism.

\medskip

Now we claim that the composition in \eqref{zero-th term, inv, comp}
is an isomorphism. The latter is equivalent to point (a) of the
following assertion, established in \cite{AG}:

\begin{lem}  \label{inv in Weyl} \hfill

\smallskip

\noindent{\em(a)}
$$\Hom_{\hg_\crit}(\Vl,\fD_{G,\crit}^{\ch})\simeq 
\Hom_{G[[t]]}(V^\la,\fD_{G,\crit}^{\ch})\simeq
\BV_{\fg,\crit}^{\tau(\la)}.$$

\smallskip

\noindent{\em(b)} For $i>0$,
$$\Ext^i_{G[[t]]}(V^\la,\fD_{G,\crit}^{\ch})=0.$$
\end{lem}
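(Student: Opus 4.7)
The plan is to prove (a) and (b) in parallel by reducing to Peter--Weyl-type computations inside the commutative chiral subalgebra $\CO_{G[[t]]} \subset \fD^{\ch}_{G,\crit}$. The two key ingredients are Frobenius reciprocity for the induced module $\Vl = \Ind_{\fg[[t]]\oplus\BC\mathbf{1}}^{\hg_\crit} V^\la$, and the structural fact that $\fD^{\ch}_{G,\crit}$ is induced from $\CO_{G[[t]]}$ as a left $\hg_\crit$-module, with a commuting right $\hg_\crit$-action extending the right-translation action on $\CO_{G[[t]]}$.

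For the first isomorphism of (a), adjunction for induction gives
$$\Hom_{\hg_\crit}(\Vl, \fD^{\ch}_{G,\crit}) \simeq \Hom_{\fg[[t]]\oplus\BC\mathbf{1}}(V^\la, \fD^{\ch}_{G,\crit}).$$
Since $V^\la$ is finite-dimensional and factors through the evaluation map $G[[t]]\to G$, while $\fD^{\ch}_{G,\crit}$ is $G[[t]]$-integrable for the left action, every $\fg[[t]]$-linear map from $V^\la$ automatically integrates to a $G[[t]]$-equivariant one. This yields the identification with $\Hom_{G[[t]]}(V^\la, \fD^{\ch}_{G,\crit})$.

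For the second isomorphism of (a), I would apply $\Hom_{G[[t]]\text{-left}}(V^\la, -)$ to the right-induction presentation of $\fD^{\ch}_{G,\crit}$. Since finite-dimensional $G[[t]]$-integrable representations are pulled back from the reductive quotient $G$, the relevant contribution can be read off from the Peter--Weyl decomposition $\CO_G = \bigoplus_\mu V^\mu \otimes V^{\tau(\mu)}$ inside $\CO_{G[[t]]}$: only the $\mu = \la$ summand contributes, giving $V^{\tau(\la)}$. Inducing this up along the right $\hg_\crit$-action then produces $\BV^{\tau(\la)}_{\fg,\crit}$, and the resulting map agrees with the natural one obtained from $V^\la \otimes V^{\tau(\la)} \to \CO_G$ described just before the statement of the lemma.

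Part (b) will follow derivedly from the same framework. The left-$G[[t]]$-isotypic decomposition is an exact functor on $G[[t]]$-integrable modules; $V^\la$ is projective in the semisimple category $\Rep(G)$; and the right induction is (appropriately) cohomologically trivial over $\CO_{G[[t]]}$. The main obstacle will be making the last point precise -- i.e., showing that the derived functor $\RHom_{G[[t]]}(V^\la, -)$ commutes with the right-induction step, so that the vanishing of higher Exts from $V^\la$ on the level of $\CO_{G[[t]]}$ propagates to the full $\fD^{\ch}_{G,\crit}$. This requires the explicit construction of the right $\hg_\crit$-action carried out in \cite{AG, GMS:hom}, and is where the bulk of the technical work sits.
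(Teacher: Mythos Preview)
The paper does not prove this lemma; it simply attributes it to \cite{AG}. Your outline is essentially the argument one finds there, so in that sense your proposal and the paper agree.

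A couple of clarifications on part (b). Your phrase ``$V^\la$ is projective in the semisimple category $\Rep(G)$'' is a slight misdirection: the category $\Rep(G[[t]])$ is not semisimple (because of the pro-unipotent radical $K_1$), and $V^\la$ is certainly not projective there. The vanishing of higher $\Ext$'s comes from the \emph{target} side. Concretely, as a left $G[[t]]$-representation, $\fD^{\ch}_{G,\crit}$ is filtered (via the PBW filtration arising from the right induction) with associated graded $\Sym(\fg\ppart/\fg[[t]])\otimes \CO_{G[[t]]}$; after the standard untwisting by the regular representation, this is a direct sum of copies of $\CO_{G[[t]]}$. Since $\CO_{G[[t]]}$ is injective in the category of $\CO_{G[[t]]}$-comodules (it is the cofree comodule on one generator), (b) follows at once, and the second isomorphism in (a) reduces to the Peter--Weyl identification $\Hom_{G[[t]]}(V^\la,\CO_{G[[t]]})\simeq V^{\tau(\la)}$ together with the right $\hg_\crit$-module structure, exactly as you describe. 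The ``main obstacle'' you flag is therefore not really a commutation of $\RHom$ with right induction, but rather the identification of the left $G[[t]]$-module structure of $\fD^{\ch}_{G,\crit}$, which is what \cite{AG} supplies.
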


\ssec{Computation of characters}

We are now ready to finish the proof of \thmref{structure}. Using the
coordinate on the formal disc, we will view $\fD_{G,\crit}^{\ch}$ as acted
on by $G\times G\times \BG_m$, where the latter acts by loop rotations.
It is easy to see that the isotypic components for the above action
are finite-dimensional.

\medskip

Using point (2) of \thmref{structure} and the above injectivity
result, we obtain that the theorem would follow once we show that for
each $\mu_1,\mu_2,d$,
\begin{align*}
&\dim\left(\Hom_{G\times G\times \BG_m}(V^{\mu_1}\otimes V^{\mu_2}
\otimes \BC^d, \fD_{G,\crit}^{\ch})\right)= \\ &\underset{\la}\sum\,
\dim\left(\Hom_{G\times G\times \BG_m}\left(V^{\mu_1}\otimes V^{\mu_2}
\otimes \BC^d, (\Vl\underset{\fz^{\reg,\la}_\fg}\otimes
\BV_{\fg,\crit}^{\tau(\la)}) \underset{\fz^{\reg,\la}_\fg}\otimes
\on{Sym}_{\fz^{\reg,\la}_\fg}
(\CN^\la_{\reg/\on{unr}})\right)\right).
\end{align*}

Since $\CN^\la_{\reg/\on{unr}}\simeq \Omega^1(\fz^{\reg,\la}_\fg)$,
and since each $\fz^{\reg,\la}_\fg$ is isomorphic to a polynomial
algebra, the multiplicities of the irreducibles in the $G\times
G\times \BG_m$-modules
$$(\Vl\underset{\fz^{\reg,\la}_\fg}\otimes \BV_{\fg,\crit}^{\tau(\la)})
\underset{\fz^{\reg,\la}_\fg}\otimes \on{Sym}_{\fz^{\reg,\la}_\fg}
(\CN^\la_{\reg/\on{unr}}) \text{ and }
\Vl\otimes \BV_{\fg,\crit}^{\tau(\la)}$$
are the same.

\medskip

Hence, it suffices  to show that for each $\mu_1,\mu_2,d$,
\begin{align*}
&\dim\left(\Hom_{G\times G\times \BG_m}(V^{\mu_1}\otimes V^{\mu_2}
\otimes \BC^d, \fD_{G,\crit}^{\ch})\right)= \\ &\underset{\la}\sum\,
\dim\left(\Hom_{G\times G\times \BG_m}\left(V^{\mu_1}\otimes V^{\mu_2}
\otimes \BC^d, \Vl\otimes \BV_{\fg,\crit}^{\tau(\la)}\right)\right).
\end{align*}

However, the one-parameter families of $G\times G\times \BG_m$-modules
given by $\BV_{\fg,\kappa_\hslash+\kappa_\crit}^{\la}\otimes
\BV_{\fg,-\kappa_\hslash+\kappa_\crit}^{\tau(\la)}$ and
$\fD_{G,\hslash}^{\ch}$, where $\kappa_{\hslash} = \hslash \ka_0$ for
some non-zero invariant inner product $\ka_0$, are
$\hslash$-flat. Hence, it is sufficient to check the equality
\begin{multline*}
\dim\left(\Hom_{G\times G\times \BG_m}(V^{\mu_1}\otimes V^{\mu_2}
\otimes \BC^d, \fD_{G,\hslash}^{\ch})\right)= \\
=\underset{\la}\sum\, \dim\left(\Hom_{G\times G\times
\BG_m}\left(V^{\mu_1}\otimes V^{\mu_2} \otimes \BC^d,
\BV_{\fg,\kappa_\hslash+\kappa_\crit}^{\la} \otimes
\BV_{\fg,-\kappa_\hslash+\kappa_\crit}^{\tau(\la)}\right)\right)
\end{multline*}
for a generic $\hslash$. The latter equality indeed holds, since for
$\hslash$ irrational we have an isomorphism of $\hg_\crit$-bimodules:
$$\fD_{G,\hslash}^{\ch}\simeq \underset{\la}\bigoplus\, 
\BV_{\fg,\kappa_\hslash+\kappa_\crit}^{\la}\otimes 
\BV_{\fg,-\kappa_\hslash+\kappa_\crit}^{\tau(\la)}$$
by \lemref{generic}.

\section{The category of spherical modules}

In this section we use the results on the CADO obtained in the
previous section to prove the Main Theorem stated in the
Introduction.

\ssec{Semi-infinite cohomology functor}

Define the character
\begin{equation}    \label{Psi0}
\chi_0: \n_+\ppart \to \C
\end{equation}
by the formula
$$
\chi_0(e_{\al,n}) = \begin{cases} 1, & \on{if} \al = \al_\imath, n=-1,
  \\ 0, & \on{otherwise}. \end{cases}
$$
We have the functors of semi-infinite cohomology (the $+$ quantum
Drinfeld--Sokolov reduction) from the category of
$\ghat_\crit$-modules to the category of graded vector spaces,
\begin{equation}    \label{n+}
{\mc M} \mapsto H^{\frac{\infty}{2}+i}(\fn_+\ppart,\fn_+[[t]],{\mc M}
\otimes \chi_0),
\end{equation}
introduced in \cite{FF,FKW} (see also \cite{vertex}, Ch. 15, and
\cite{FG2}, Sect. 18; we follow the notation of the latter).

More generally, for a complex $\CM^\bullet$ of $\hg_\crit$-modules (or
of $\fn\ppart$-modules), the corresponding semi-infinite Chevalley
complex
$$\bC^\semiinf(\fn\ppart,\CM^\bullet\otimes \chi_0)$$
gives rise to a well-defined triangulated functor
$$D^+(\hg_\crit\mod)\to D(\Vect).$$ This is an analogue of the Whittaker
functor in representation theory of reductive groups over local
fields.

Since $\fZ_\fg$ maps to the center of the category $\hg_\crit\mod$,
the above functor naturally lifts to a functor 
$$D^+(\hg_\crit\mod)\to D(\fZ_\fg\mod).$$ 
By \thmref{support}, the composed functor
$$D^+(\hg_\crit\mod^{G[[t]]})\to D^+(\hg_\crit\mod)\to D(\fZ_\fg\mod),$$
factors through a functor
$$\Psi:D^+(\hg_\crit\mod^{G[[t]]})\to
D\left(\QCoh(\Spec(\fZ_\fg^{\on{unr}}))\right).$$

\medskip

The main result of this paper is the following:

\begin{thm}  \label{main}
The functor $\Psi$ is exact (with respect to the natural t-structures)
and defines an equivalence of abelian categories
\begin{equation} \label{equiv}
\hg_\crit\mod^{G[[t]]} \overset\sim{\longrightarrow}
\QCoh(\Spec(\fZ_\fg^{\on{unr}})).
\end{equation}
\end{thm}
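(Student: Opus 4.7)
The plan is to use the chiral algebra $\fD^{\ch}_{G,\crit}$ of differential operators on $G$, whose structure as a bimodule is tightly controlled by \thmref{structure}, as an integral kernel for both $\Psi$ and its quasi-inverse. The two commuting $\hg_\crit$-actions on $\fD^{\ch}_{G,\crit}$ play dual roles: the left action exhibits $\fD^{\ch}_{G,\crit}$ as a (universal) spherical module, while semi-infinite reduction along the right action will produce the quasi-inverse functor.

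The first step is to compute $\Psi$ on the generators $\Vl$ of $\hg_\crit\mod^{G[[t]]}$. By the Wakimoto realization and the Feigin--Frenkel analysis of the quantum Drinfeld--Sokolov reduction (\cite{FF,FKW}), one obtains
$$\Psi(\Vl) \simeq \fz^{\reg,\la}_\fg,$$
concentrated in cohomological degree zero; as an object of $\QCoh(\Op^{\on{unr},\la})$ this is the structure sheaf of the reduced closed subscheme $\Op^{\reg,\la}\hookrightarrow\Op^{\on{unr},\la}$. Exactness of $\Psi$ then follows from \thmref{support}: each $\CM\in\hg_\crit\mod^{G[[t]]}$ splits as $\bigoplus_\la \CM^\la$, and each $\CM^\la$ admits an increasing filtration by quotients of $\Vl$, so the long exact sequence of the semi-infinite complex together with the flatness of $\Vl$ over $\fz^{\reg,\la}_\fg$ (\cite{FG6}) propagates the vanishing $H^{\semiinfi}=0$ for $i\neq 0$ to the whole abelian category.

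The quasi-inverse $\Phi:\QCoh(\Spec\fZ_\fg^{\on{unr}})\to \hg_\crit\mod^{G[[t]]}$ will be constructed by applying semi-infinite reduction along the \emph{right} $\hg_\crit$-action on the CADO, after tensoring with the input sheaf along the right $\fZ_\fg$-action; the surviving left $\hg_\crit$-action, which remains $G[[t]]$-integrable, lands $\Phi(\CF)$ in $\hg_\crit\mod^{G[[t]]}$. On the structure sheaf $\fz^{\reg,\la}_\fg$ of $\Op^{\reg,\la}$, \thmref{structure}(1) identifies the relevant tensor product with $F^0(\fD^{\ch,\la}_{G,\crit})\simeq \Vl\otimes_{\fz^{\reg,\la}_\fg}\BV^{\tau(\la)}_{\fg,\crit}$, and the right-side semi-infinite reduction of $\BV^{\tau(\la)}_{\fg,\crit}$ against $\chi_0$ collapses to $\fz^{\reg,\tau(\la)}_\fg\simeq \fz^{\reg,\la}_\fg$, yielding $\Phi(\fz^{\reg,\la}_\fg)\simeq \Vl$. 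The Hom computation in \lemref{inv in Weyl}(a) then ensures that $\Phi$ and $\Psi$ are mutual adjoints on generators.

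The main obstacle will be extending these generator-level identifications to the full formal thickening $\Op^{\on{unr},\la}\supset\Op^{\reg,\la}$ and to arbitrary quasi-coherent sheaves. The key input is \thmref{structure}(2): the identification $\on{gr}^n(\fD^{\ch,\la}_{G,\crit})\simeq F^0(\fD^{\ch,\la}_{G,\crit})\otimes_{\fz^{\reg,\la}_\fg}\Sym^n(\CN^\la_{\reg/\on{unr}})$ matches the associated graded of $\fZ_\fg^{\on{unr},\la}$ with respect to the $\bI^\la$-adic filtration (cf.\ \cite{FG2}, Section 4.6). Concretely, this matching of associated gradeds allows one to upgrade the generator-level isomorphisms to a compatibility of $\Psi$ and $\Phi$ with the $\bI^\la$-adic filtrations on both sides, and a five-lemma argument then reduces both $\Phi\Psi\simeq\id$ and $\Psi\Phi\simeq\id$ to the reduced-locus case. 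Tracking this reduction carefully, especially the interplay with the Cartan involution $\tau$ twisting the two $\fZ_\fg$-actions (\lemref{two embeddings}) and the potentially infinite nature of the filtration, is the central delicacy.
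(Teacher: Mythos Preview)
Your overall plan diverges from the paper's: you try to build an explicit quasi-inverse $\Phi$ as an integral transform against $\fD^{\ch}_{G,\crit}$, whereas the paper never constructs $\Phi$. Instead it proves that $\Psi$ is fully faithful by matching the self-Ext algebras of $\Vl$ and of $\fz^{\reg,\la}_\fg$ (\propref{cor FT}), using $\fD^{\ch,\la}_{G,\crit}$ only to \emph{resolve} $\Vl$ via the relative Chevalley complex $\bC^\bullet(\fg[[t]];\fg,\fD^{\ch,\la}_{G,\crit}\otimes V^\la)$ taken along the right action (which is quasi-isomorphic to $\Vl$ by \lemref{inv in Weyl}); essential surjectivity then follows by the same $\bI^\la$-adic d\'evissage you invoke for exactness.

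There is a concrete gap in your construction of $\Phi$. You assert that tensoring $\fD^{\ch,\la}_{G,\crit}$ with $\fz^{\reg,\la}_\fg$ along the right $\fZ_\fg$-action yields $F^0(\fD^{\ch,\la}_{G,\crit})$. But $F^0$ is the \emph{submodule} annihilated by $\bI^\la$, while the tensor product computes the \emph{quotient} by $\bI^\la\cdot\fD^{\ch,\la}_{G,\crit}$. These are not the same; in fact \thmref{structure}(2) (equivalently \propref{Kashiwara}) says that multiplication by $\bI^\la$ induces, on associated graded pieces, the surjective contraction
\[
\bigl(\bI^\la/(\bI^\la)^2\bigr)\underset{\fz^{\reg,\la}_\fg}\otimes \Sym^n(\CN^\la_{\reg/\on{unr}})\twoheadrightarrow \Sym^{n-1}(\CN^\la_{\reg/\on{unr}}),
\]
so by induction $\bI^\la\cdot F^n=F^{n-1}$ and hence $\bI^\la\cdot\fD^{\ch,\la}_{G,\crit}=\fD^{\ch,\la}_{G,\crit}$. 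The tensor product you form is therefore \emph{zero}, $\Phi$ as described is the zero functor, and the five-lemma argument in your final paragraph has no base case. Reordering the two operations does not help, since the same $\bI^\la$-divisibility survives semi-infinite reduction along the right action.

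This is exactly the phenomenon that $\fD^{\ch,\la}_{G,\crit}$ behaves like a $(\iota^\la_{\reg/\on{unr}})_*$-pushforward rather than something flat for $(\iota^\la_{\reg/\on{unr}})^*$; an integral-kernel approach would need $\iota^!$- or $R\Hom$-type functors in place of the plain tensor product, at which point covariance and exactness become genuine problems. The paper's route via \propref{cor FT} and \propref{calc on D} avoids this entirely.
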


\ssec{Strategy of the proof}

We will derive \thmref{main} from the following two statements.

\begin{prop}  \label{cor FT}
There exists an isomorphism of algebras
\begin{equation} \label{FT}
R^\bullet\Hom_{\hg_\crit\mod^{G[[t]]}}(\Vl,\Vl)\simeq
R^\bullet\Hom_{\QCoh(\Spec(\fZ_\fg^{\on{unr}}))}
(\fz^{\reg,\la}_\fg,\fz^{\reg,\la}_\fg).
\end{equation}
\end{prop}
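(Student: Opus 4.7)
My plan begins with the right-hand side of \eqref{FT}. Since $\Spec(\fz^{\reg,\la}_\fg) \hookrightarrow \Spec(\fZ_\fg^{\on{unr},\la})$ is a regular closed embedding with normal bundle $\CN^\la_{\reg/\on{unr}}$ (as recorded in the previous subsection from \cite{FG2}), the Koszul resolution of $\fz^{\reg,\la}_\fg$ over $\fZ_\fg^{\on{unr},\la}$ yields at once
$$R\Hom_{\QCoh(\Spec(\fZ_\fg^{\on{unr}}))}(\fz^{\reg,\la}_\fg,\fz^{\reg,\la}_\fg) \simeq \Lambda^\bullet_{\fz^{\reg,\la}_\fg}(\CN^\la_{\reg/\on{unr}})$$
as graded-commutative algebras.

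For the left-hand side I would probe with $\fD_{G,\crit}^{\ch,\la}$, combining \lemref{inv in Weyl} with \thmref{structure}. On the one hand, \lemref{inv in Weyl} gives
$$R\Hom_{\hg_\crit\mod^{G[[t]]}}(\Vl,\fD_{G,\crit}^{\ch,\la}) \simeq \BV_{\fg,\crit}^{\tau(\la)},$$
concentrated in cohomological degree zero. On the other hand, applying $R\Hom_{\hg_\crit\mod^{G[[t]]}}(\Vl,-)$ to the increasing filtration $F^\bullet$ of \thmref{structure}, and using that $\BV_{\fg,\crit}^{\tau(\la)}$ is flat over $\fz^{\reg,\la}_\fg$ (from \cite{FG6}), gives a convergent spectral sequence with
\begin{equation*}
E_1^{p,q} = \Ext^{p+q}_{\hg_\crit\mod^{G[[t]]}}(\Vl,\Vl) \underset{\fz^{\reg,\la}_\fg}\otimes \BV_{\fg,\crit}^{\tau(\la)} \underset{\fz^{\reg,\la}_\fg}\otimes \Sym^p_{\fz^{\reg,\la}_\fg}(\CN^\la_{\reg/\on{unr}})
\end{equation*}
abutting to $\BV_{\fg,\crit}^{\tau(\la)}$ in total degree zero.

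The crucial step is to identify the differential $d_1$ with the Koszul differential attached to the perfect pairing $\CN^\la_{\reg/\on{unr}}\otimes_{\fz^{\reg,\la}_\fg}(\bI^\la/(\bI^\la)^2)\to\fz^{\reg,\la}_\fg$. This is a deformation-theoretic assertion: the class of each short exact sequence $0\to F^{n-1}\to F^n\to \on{gr}^n\to 0$ should match, under the natural map $\CN^\la_{\reg/\on{unr}}\to \Ext^1_{\hg_\crit\mod^{G[[t]]}}(\Vl,\Vl)$ induced by square-zero thickenings of $\fz^{\reg,\la}_\fg$ inside $\fZ_\fg^{\on{unr},\la}$, the tautological Koszul class. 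Once this is in place, convergence of the spectral sequence to $\BV_{\fg,\crit}^{\tau(\la)}$ in degree zero, together with standard Koszul duality between $\Sym$ and $\Lambda$, forces
$$\Ext^\bullet_{\hg_\crit\mod^{G[[t]]}}(\Vl,\Vl) \simeq \Lambda^\bullet_{\fz^{\reg,\la}_\fg}(\CN^\la_{\reg/\on{unr}}),$$
with the algebra structure matching by naturality of Yoneda composition.

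I expect the identification of $d_1$ with a genuine Koszul differential to be the main obstacle. One must verify that the natural map $\CN^\la_{\reg/\on{unr}}\to \Ext^1(\Vl,\Vl)$ is injective, so that $d_1$ is non-degenerate, and that its image generates the Ext-algebra multiplicatively. The concrete realization of the relevant extensions inside $\fD_{G,\crit}^{\ch,\la}$, together with character estimates analogous to those used in the proof of \thmref{structure}, should supply both.
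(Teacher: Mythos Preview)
Your approach differs substantially from the paper's. The paper does not compute the two sides of \eqref{FT} separately and then match them; rather, it shows that the functor $\Psi$ induces the isomorphism directly. The device is the relative Chevalley complex $\bC^\bullet(\fg[[t]];\fg,\fD_{G,\crit}^{\ch,\la}\otimes V^\la)$, formed with respect to the \emph{right} $\hg_\crit$-action, which by \lemref{inv in Weyl} is quasi-isomorphic to $\Vl$. One then applies $R\Hom(\Vl,-)$ and $\Psi$ term by term; \propref{calc on D} handles each individual term, and a grading argument (as in \cite{FG6}, Sect.~4) disposes of the convergence issue when $\Psi$ is pushed past the Chevalley complex. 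This route has the additional payoff that the resulting isomorphism is the one induced by $\Psi$, which is exactly what the subsequent proof of \thmref{main} uses.

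Your argument has a genuine gap beyond the one you already flag. Even granting that $d_1$ is the Koszul differential, the step ``convergence to $\BV_{\fg,\crit}^{\tau(\la)}$ in degree zero, together with standard Koszul duality between $\Sym$ and $\Lambda$, forces $\Ext^\bullet(\Vl,\Vl)\simeq\Lambda^\bullet(\CN^\la_{\reg/\on{unr}})$'' is not a standard statement. Koszul duality runs the other way: if the unknown algebra $A=\Ext^\bullet(\Vl,\Vl)$ were already $\Lambda(\CN)$, then $(A\otimes\Sym(\CN),d_1)$ would be acyclic. The converse recognition principle --- deducing the structure of $A$ from the abutment of the spectral sequence --- is not a citation but its own argument: one must rule out higher differentials $d_r$ and then induct on the cohomological degree of $A$. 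For instance, to conclude that $\phi:\CN\to A^1$ is \emph{surjective} (injectivity is easy, as you note), you must show that the relevant $E_2$-term receives no nonzero $d_r$ for $r\geq 2$, which in turn requires having already computed the $E_2$-terms in lower cohomological degree along the whole filtration. This can be pushed through, but it is more delicate than your sketch suggests and considerably less direct than the paper's use of $\Psi$.
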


Let $\hg_\crit\mod_{\reg,\la}$ be the full subcategory of
$\hg_\crit\mod$, consisting of modules, whose support over $\fZ_\fg$
is contained in $\Spec(\fz^{\reg,\la}_\fg)$. Let
$\hg_\crit\mod_{\reg,\la}^{G[[t]]}$ denote the intersection
$$\hg_\crit\mod_{\reg,\la} \cap \hg_\crit\mod^{G[[t]]}.$$

\begin{prop} \label{reg equiv}
The functors $\Psi$ and $\CL\mapsto
\Vl\underset{\fz^{\reg,\la}_\fg}\otimes \CL$ define mutually
quasi-inverse equivalences
$$\hg_\crit\mod_{\reg,\la}^{G[[t]]}\leftrightarrows
\fz^{\reg,\la}_\fg\mod.$$
\end{prop}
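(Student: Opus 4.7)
The plan is to establish that $\Vl$ is a projective generator of $\hg_\crit\mod_{\reg,\la}^{G[[t]]}$ with endomorphism algebra $\fz^{\reg,\la}_\fg$, apply Morita theory to obtain the equivalence $\hg_\crit\mod_{\reg,\la}^{G[[t]]} \simeq \fz^{\reg,\la}_\fg\mod$ realized by $\CM \mapsto \Hom_{\hg_\crit}(\Vl, \CM)$ with quasi-inverse the induction functor $\Ind(\CL) := \Vl \underset{\fz^{\reg,\la}_\fg}\otimes \CL$, and then identify $\Hom_{\hg_\crit}(\Vl, -)$ with the semi-infinite cohomology functor $\Psi$ on this subcategory.

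The projectivity of $\Vl$ is a formal consequence of \propref{cor FT}: the identification $R^\bullet\Hom_{\hg_\crit\mod^{G[[t]]}}(\Vl, \Vl) \simeq R^\bullet\Hom_{\fz^{\reg,\la}_\fg}(\fz^{\reg,\la}_\fg, \fz^{\reg,\la}_\fg)$ is concentrated in degree zero because $\fz^{\reg,\la}_\fg$ is a polynomial algebra. Combined with the filtration following \thmref{support}, whereby every $\CM$ in the subcategory admits an increasing filtration whose subquotients are quotients of $\Vl$, a standard d\'evissage extends this to $\Ext^i_{\hg_\crit\mod^{G[[t]]}}(\Vl, \CM) = 0$ for $i > 0$; the same filtration also realizes $\Vl$ as a generator, since lifting the surjections from $\Vl$ onto the subquotients via projectivity produces enough maps $\Vl \to \CM$ to cover $\CM$. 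Since $\End_{\hg_\crit}(\Vl) = \fz^{\reg,\la}_\fg$ by \cite{FG6}, Morita theory then yields the claimed abelian equivalence.

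To identify the Morita functor with $\Psi$, the key input is the computation $\Psi(\Vl) \simeq \fz^{\reg,\la}_\fg$ as $\fz^{\reg,\la}_\fg$-algebras — essentially the $+$-quantum Drinfeld--Sokolov reduction of the Weyl module at the critical level, recovering the algebra of functions on $\Op^{\reg,\la}$ via the Feigin--Frenkel isomorphism — together with the vanishing of higher semi-infinite cohomology on $\Vl$. Given these, both $\Psi$ and $\Hom_{\hg_\crit}(\Vl, -)$ are $\fZ_\fg$-linear and exact on the subcategory (the former by the vanishing just mentioned together with d\'evissage along the filtration, the latter by projectivity of $\Vl$), and they agree on the generator $\Vl$ via a canonical morphism; since every object of $\hg_\crit\mod_{\reg,\la}^{G[[t]]}$ is a cokernel of a map between direct sums of copies of $\Vl$, the two functors agree functorially on the whole subcategory.

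The principal obstacle is the Drinfeld--Sokolov computation $\Psi(\Vl) \simeq \fz^{\reg,\la}_\fg$, and more precisely the concentration of $H^{\semiinfi}(\fn_+\ppart, \fn_+[[t]], \Vl \otimes \chi_0)$ in cohomological degree $i=0$; once this input is in hand, the remainder of the argument is formal, relying only on \propref{cor FT} and the filtration consequence of \thmref{support}.
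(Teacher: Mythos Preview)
There is a genuine gap in your first step. You have misread \propref{cor FT}: its right-hand side is
\[
R^\bullet\Hom_{\QCoh(\Spec(\fZ_\fg^{\on{unr}}))}(\fz^{\reg,\la}_\fg,\fz^{\reg,\la}_\fg),
\]
not $R^\bullet\Hom_{\fz^{\reg,\la}_\fg}(\fz^{\reg,\la}_\fg,\fz^{\reg,\la}_\fg)$. Since $\Spec(\fz^{\reg,\la}_\fg)\hookrightarrow \Spec(\fZ_\fg^{\on{unr},\la})$ is a regular closed embedding with nontrivial normal bundle $\CN^\la_{\reg/\on{unr}}\simeq \Omega^1(\fz^{\reg,\la}_\fg)$, the paper's Corollary immediately gives
\[
\Ext^\bullet_{\hg_\crit\mod^{G[[t]]}}(\Vl,\Vl)\simeq \Lambda^\bullet_{\fz^{\reg,\la}_\fg}(\CN^\la_{\reg/\on{unr}}),
\]
which is \emph{not} concentrated in degree $0$. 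Thus $\Vl$ is not projective in $\hg_\crit\mod^{G[[t]]}$, your d\'evissage to arbitrary $\CM$ in the subcategory fails already at $\CM=\Vl$, and the Morita argument as written does not get off the ground. The fact that $\fz^{\reg,\la}_\fg$ is a polynomial algebra is irrelevant here; what matters is the nontrivial formal thickening of $\Op^{\reg,\la}$ inside $\Op^{\on{unr},\la}$.

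The paper's proof also uses \propref{cor FT} as the essential input for the equivalence given by $\CL\mapsto \Vl\underset{\fz^{\reg,\la}_\fg}\otimes \CL$, but it invokes the argument of \cite{FG1}, Section~8, rather than a bare projectivity claim in the ambient category; that argument works at the level of the abelian subcategory $\hg_\crit\mod_{\reg,\la}^{G[[t]]}$ and does not need vanishing of higher Ext in $\hg_\crit\mod^{G[[t]]}$. For the identification of $\Psi$ with the inverse, the paper does not compare $\Psi$ with $\Hom_{\hg_\crit}(\Vl,-)$; instead it directly shows $\Psi(\Vl\underset{\fz^{\reg,\la}_\fg}\otimes \CL)\simeq \CL$ by passing (via direct limits) to finitely presented $\CL$, then to finite free resolutions (using that $\fz^{\reg,\la}_\fg$ is polynomial), and finally applying $\Psi(\Vl)\simeq \fz^{\reg,\la}_\fg$ from \cite{FG6}. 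Your second half could be made to work along similar lines, but only after the first half is repaired.
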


For $\la = 0$ this was proved in \cite{FG1} (see also Conjecture
10.3.12 of \cite{F:book}).

We remark that, conversely, both of these propositions follow from
\thmref{main} and the isomorphism
\begin{equation} \label{semiinf of vac}
\Psi(\Vl)\simeq \fz^{\reg,\la}_\fg
\end{equation}
established in \cite{FG6}.

Note that $\Spec(\fz^{\reg,\la}_\fg)\to \Spec(\fZ_\fg^{\on{unr}})$
is a regular embedding. Therefore we have
$$R^\bullet\Hom_{\QCoh(\Spec(\fZ_\fg^{\on{unr}}))}
(\fz^{\reg,\la}_\fg,\fz^{\reg,\la}_\fg) \simeq
\Lambda^\bullet_{\fz^{\reg,\la}_\fg} (\CN^\la_{\reg/\on{unr}}).$$

Combining this with \propref{cor FT}, we obtain:

\begin{cor}
\begin{equation} \label{true FT}
R^\bullet\Hom_{\hg_\crit\mod^{G[[t]]}}(\Vl,\Vl)\simeq
\Lambda^\bullet_{\fz^{\reg,\la}_\fg}
(\CN^\la_{\reg/\on{unr}}).
\end{equation}
\end{cor}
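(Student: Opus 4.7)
The plan is to combine \propref{cor FT} with the standard Koszul-theoretic computation of $\mathrm{Ext}$-algebras along a regular closed embedding. By \propref{cor FT}, the left-hand side of \eqref{true FT} is isomorphic, as a graded algebra, to $R^\bullet\Hom_{\QCoh(\Spec(\fZ_\fg^{\on{unr}}))}(\fz^{\reg,\la}_\fg, \fz^{\reg,\la}_\fg)$. Since $\fz^{\reg,\la}_\fg$ is supported on the single connected component $\Spec(\fZ_\fg^{\on{unr},\la})$ of $\Spec(\fZ_\fg^{\on{unr}})$, this $\Ext$-algebra may be computed inside $\QCoh(\Spec(\fZ_\fg^{\on{unr},\la}))$, so the problem reduces to computing the self-$\Ext$ algebra of the structure sheaf of the closed subscheme cut out by the ideal $\bI^\la$.

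The key geometric input is that the closed embedding
$$\iota^\la_{\reg/\on{unr}}: \Spec(\fz_\fg^{\reg,\la}) \hookrightarrow \Spec(\fZ_\fg^{\on{unr},\la})$$
is a regular embedding. This is precisely the content of the assertion recalled at the start of Section~2.2 from \cite{FG2}, Section~4.6: the conormal sheaf $\bI^\la/(\bI^\la)^2$ is locally free over $\fz^{\reg,\la}_\fg$ (in fact, identified via the Poisson structure on $\fZ_\fg$ with $\Omega^1(\fz^{\reg,\la}_\fg)$), and its dual is the vector bundle $\CN^\la_{\reg/\on{unr}}$. Once regularity is in hand, the Koszul resolution of $\fz^{\reg,\la}_\fg$ as a $\fZ_\fg^{\on{unr},\la}$-algebra yields the standard formula
$$R^\bullet\Hom_{\QCoh(\Spec(\fZ_\fg^{\on{unr},\la}))}(\fz^{\reg,\la}_\fg, \fz^{\reg,\la}_\fg) \simeq \Lambda^\bullet_{\fz^{\reg,\la}_\fg}(\CN^\la_{\reg/\on{unr}})$$
of graded-commutative algebras, which combined with the first step gives \eqref{true FT}.

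There is no substantial obstacle here: the proof is purely a two-step concatenation, with the nontrivial content already contained in \propref{cor FT}. The only points worth confirming explicitly are (i) that the decomposition $\Spec(\fZ_\fg^{\on{unr}}) = \bigsqcup_{\la} \Spec(\fZ_\fg^{\on{unr},\la})$ permits reduction to the $\la$-component, which is clear because the two sides are supported on the same component, and (ii) that the isomorphism of \propref{cor FT} is multiplicative with respect to the Yoneda products on both sides, which is already built into its statement.
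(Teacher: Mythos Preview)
Your proof is correct and follows exactly the same route as the paper: combine \propref{cor FT} with the observation that $\Spec(\fz^{\reg,\la}_\fg)\hookrightarrow \Spec(\fZ_\fg^{\on{unr}})$ is a regular embedding, so that the self-$\Ext$ algebra on the right-hand side of \eqref{FT} is the exterior algebra on the normal bundle. The paper states this in one line; your additional remarks on reducing to the $\la$-component and on multiplicativity are reasonable clarifications but not essential.
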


\medskip

For $\la=0$ the isomorphism \eqref{true FT} was established in
\cite{FT} by other methods. The above proof is independent of
\cite{FT} and therefore provides an alternative argument.

\medskip

Let $\chi$ be a $\C$-point of $\Op^{\on{unr}}$, that is, a
$\la$-regular oper in $\Op^{\reg,\la}$ for some $\la \in
\Lambda^+$. We denote by $\gmod_\chi$ the category of
$\hg_\crit$-modules on which the center $\fZ_\fg$ acts according to
the character associated to $\chi$. Let $\gmod_\chi^{G[[t]]}$ be the
corresponding $G[[t]]$-equivariant category. This category contains
the quotient $\Vl(\chi)$ of the Weyl module $\Vl$ by the central
character $\chi$. \thmref{main} then has the following corollary (see
Conjecture 10.3.11 of \cite{F:book}):

\begin{cor}
For any $\chi \in \Op^{\reg,\la}, \la \in \Lambda^+$, the category
$\gmod^{G[[t]]}_\chi$ is equivalent to the category of vector spaces:
its unique, up to isomorphism, irreducible object is $\Vl(\chi)$ and
any other object is isomorphic to a direct sum of copies of
$\Vl(\chi)$. This equivalence is given by the functor $\Psi$.
\end{cor}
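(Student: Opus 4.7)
The plan is to deduce this corollary as a direct base change of the Main Theorem at the closed point $\chi$. By \thmref{main}, the functor $\Psi$ is an equivalence of abelian categories $\gmod^{G[[t]]} \simeq \QCoh(\Spec(\fZ_\fg^{\on{unr}}))$, compatible with the $\fZ_\fg$-actions on both sides. The first step is to identify $\gmod^{G[[t]]}_\chi$ as the full subcategory of $\gmod^{G[[t]]}$ carved out by the condition that $\fZ_\fg$ act through the character $\chi$. Under $\Psi$, this corresponds to the full subcategory of $\QCoh(\Spec(\fZ_\fg^{\on{unr}}))$ consisting of sheaves whose $\fZ_\fg$-module structure factors through the quotient $\fZ_\fg \twoheadrightarrow \BC_\chi$, i.e., $\BC_\chi$-modules. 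Since $\chi$ is a $\BC$-point of $\Op^{\on{unr}}$, this subcategory is canonically equivalent to $\Vect$, the equivalence being taking the fiber at $\chi$.

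Next, I would identify the image of the simple object. Since $\chi \in \Op^{\reg,\la}$, the residue field $\BC_\chi$ is canonically a $\fz^{\reg,\la}_\fg$-module. Applying the inverse equivalence of \propref{reg equiv}, namely $\CL \mapsto \Vl \underset{\fz^{\reg,\la}_\fg}{\otimes} \CL$, to $\CL = \BC_\chi$ yields exactly $\Vl(\chi)$ by definition. Equivalently, the isomorphism $\Psi(\Vl) \simeq \fz^{\reg,\la}_\fg$ from \eqref{semiinf of vac}, combined with the exactness of $\Psi$ asserted in \thmref{main}, gives $\Psi(\Vl(\chi)) \simeq \BC_\chi$. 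Consequently, under the composite equivalence $\gmod^{G[[t]]}_\chi \simeq \Vect$ furnished by $\Psi$, the object $\Vl(\chi)$ corresponds to the one-dimensional vector space. It follows that $\Vl(\chi)$ is the unique irreducible object in $\gmod^{G[[t]]}_\chi$ up to isomorphism and that every object is isomorphic to a direct sum of copies of $\Vl(\chi)$, matching the decomposition of the corresponding vector space.

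There is essentially no serious obstacle beyond the Main Theorem itself: the corollary is the statement that the categorical base change along $\gmod^{G[[t]]}_\chi \hookrightarrow \gmod^{G[[t]]}$ matches the scheme-theoretic base change along $\{\chi\} \hookrightarrow \Spec(\fZ_\fg^{\on{unr}})$. The only point requiring care is verifying that the subcategory $\gmod^{G[[t]]}_\chi$ — where $\fZ_\fg$ acts through the character $\chi$, without any nilpotent thickening — matches precisely the sheaves on which the structure sheaf acts through $\chi$; this is immediate from the $\fZ_\fg$-linearity of $\Psi$, so no additional argument beyond \thmref{main} and \eqref{semiinf of vac} is required.
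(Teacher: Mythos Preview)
Your proposal is correct and takes essentially the same approach as the paper: the corollary is stated there as an immediate consequence of \thmref{main} without a separate proof, and you have simply spelled out the obvious base-change argument together with the identification $\Psi(\Vl(\chi))\simeq\BC_\chi$ via \eqref{semiinf of vac} (or equivalently via \propref{reg equiv}).
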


This provides a non-trivial test of our conjecture, described in the
Introduction (see formula \eqref{indep of oper}), that the categories
$\gmod_{\chi}^K$ and $\gmod_{\chi'}^K$ are equivalent whenever the
local systems underlying $\chi$ and $\chi'$ are isomorphic to each
other.

\ssec{Computation of $\Psi$}

The first step is to compute the functor $\Psi$ on the objects
$\fD_{G,\crit}^{\ch,\la}$. Since the functor $\Psi$ commutes with
direct limits, from \thmref{structure} and \eqref{semiinf of vac} we 
obtain that $\CB_G^\la:=\Psi(\fD_{G,\crit}^{\ch,\la})$ is acyclic off
cohomological degree $0$ (here we view $\fD_{G,\crit}^{\ch,\la}$
as an object of $\hg_\crit\mod^{G[[t]]}$ via the left action $\fl$). 

%

\medskip

\begin{prop}  \label{calc on D}
The functor $\Psi$ defines an isomorphism
$$\Hom_{\hg_\crit\mod}(\Vl,\fD_{G,\crit}^{\ch,\la})\to
\Hom_{\fZ\mod}(\fz^{\reg,\la}_\fg,\CB_G^\la)$$ (here we consider the
left action $\fl$ of $\hg_\crit$ on
$\fD_{G,\crit}^{\ch,\la}$). Furthermore, the higher $R^i\Hom$'s,
$$R^i\Hom_{\hg_\crit\mod^{G[[t]]}}(\Vl,\fD_{G,\crit}^{\ch,\la})
\quad \text{ and } \quad
R^i\Hom_{D(\QCoh(\Spec(\fZ_\fg^{\on{unr}})))}(\fz^{\reg,\la}_\fg,
\CB_G^\la),$$
vanish.
\end{prop}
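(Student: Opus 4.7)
My plan is to handle the two sides of the claimed isomorphism by essentially independent arguments, identifying both with $\BV_{\fg,\crit}^{\tau(\la)}$ in degree zero and showing vanishing in positive degrees; matching the two sides then follows from naturality of $\Psi$. The Kac--Moody side reduces immediately to \lemref{inv in Weyl}, while the sheaf side is the main obstacle.

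For the Kac--Moody side, since $\Vl$ is induced from $\fg[[t]] \oplus \BC\mb{1}$ and $\fD_{G,\crit}^{\ch,\la}$ is $G[[t]]$-integrable with respect to the left action, Frobenius reciprocity gives $\Hom_{\hg_\crit\mod}(\Vl, \fD_{G,\crit}^{\ch,\la}) \simeq \Hom_{G[[t]]}(V^\la, \fD_{G,\crit}^{\ch,\la})$, which equals $\BV_{\fg,\crit}^{\tau(\la)}$ by \lemref{inv in Weyl}(a) together with the $\la$-support decomposition of $\fD_{G,\crit}^{\ch}$. For the positive-degree vanishing I would invoke the identification $\Ext^i_{\hg_\crit\mod^{G[[t]]}}(\Vl, \CM) \simeq \Ext^i_{G[[t]]}(V^\la, \CM)$ noted in the Notation section, then apply \lemref{inv in Weyl}(b).

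On the sheaf side, since $\Spec(\fz^{\reg,\la}_\fg) \hookrightarrow \Spec(\fZ_\fg^{\on{unr},\la})$ is a regular embedding (as noted before the Corollary to \propref{cor FT}), I would resolve $\fz^{\reg,\la}_\fg$ over $\fZ_\fg^{\on{unr},\la}$ by the Koszul complex $\fZ_\fg^{\on{unr},\la} \otimes_{\fz^{\reg,\la}_\fg} \Lambda^\bullet((\CN^\la_{\reg/\on{unr}})^\vee)$. Applying $\Hom(-,\CB_G^\la)$ produces the complex $K^\bullet := \CB_G^\la \otimes_{\fz^{\reg,\la}_\fg} \Lambda^\bullet(\CN^\la_{\reg/\on{unr}})$, whose cohomology computes $R^\bullet\Hom(\fz^{\reg,\la}_\fg,\CB_G^\la)$.

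The main obstacle is showing that $K^\bullet$ is acyclic away from degree $0$, where it should give $\BV_{\fg,\crit}^{\tau(\la)}$. For this I would filter $K^\bullet$ by $\Phi^p(K^k) := F^{p-k}(\CB_G^\la) \otimes \Lambda^k$, a filtration preserved by the differential because $\bI^\la$ lowers $F^\bullet(\CB_G^\la)$ by one. By part (2) of \thmref{structure}, the induced differential on the associated graded identifies with the standard Koszul differential on $\BV_{\fg,\crit}^{\tau(\la)} \otimes_{\fz^{\reg,\la}_\fg} \on{Sym}^\bullet(\CN^\la_{\reg/\on{unr}}) \otimes \Lambda^\bullet(\CN^\la_{\reg/\on{unr}})$, with $(\CN^\la_{\reg/\on{unr}})^\vee$ acting on the symmetric factor by contraction. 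A Poincar\'e-type homotopy, satisfying $dh + hd = (\text{total degree})\cdot \id$, shows this is acyclic in positive total weight (symmetric degree plus exterior degree) and collapses to $\BV_{\fg,\crit}^{\tau(\la)}$ in weight zero. Since $\Phi^\bullet$ is bounded below and exhaustive on each $K^k$, the associated spectral sequence converges to $R^\bullet\Hom(\fz^{\reg,\la}_\fg,\CB_G^\la)$, yielding both the vanishing in positive degrees and the desired identification in degree zero.
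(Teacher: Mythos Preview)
Your proof is correct and follows essentially the same route as the paper: on the Kac--Moody side both arguments simply invoke \lemref{inv in Weyl}, and on the sheaf side both arguments reduce to the fact that the filtration on $\CB_G^\la$ inherited from $\fD_{G,\crit}^{\ch,\la}$ via $\Psi$ is the canonical $\bI^\la$-adic filtration with associated graded $\BV_{\fg,\crit}^{\tau(\la)}\otimes_{\fz^{\reg,\la}_\fg}\on{Sym}^\bullet(\CN^\la_{\reg/\on{unr}})$. The only difference is packaging: the paper phrases the conclusion as the vanishing of $R^i(\iota^\la)^!(\CB_G^\la)$ and treats the implication ``this filtration structure $\Rightarrow$ $R^{>0}(\iota^\la)^!=0$'' as a known Kashiwara-type fact, whereas you unpack that implication explicitly via the Koszul resolution and the filtered spectral sequence. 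Your explicit Koszul argument is exactly what justifies the paper's ``this implies'', so the two proofs are the same in substance.
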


\begin{proof}
From \lemref{inv in Weyl}, we know that
$$R^i\Hom_{\hg_\crit\mod^{G[[t]]}}(\Vl,\fD_{G,\crit}^{\ch,\la})=0$$
for $i>0$ and
$$\Hom_{\hg_\crit\mod}(\Vl,\fD_{G,\crit}^{\ch,\la})\simeq 
\BV_{\fg,\crit}^{\tau(\la)}.$$

\medskip

By \thmref{structure}(2) and formula \eqref{semiinf of vac},
$\CB^\la_G$ has a filtration with the associated
graded quotients given by
$$\on{gr}^n(\CB_G^\la)\simeq \BV_{\fg,\crit}^{\tau(\la)}
\underset{\fz^{\reg,\la}_\fg}\otimes
\on{Sym}^n_{\fz^{\reg,\la}_\fg}(\CN^\la_{\reg/\on{unr}}).$$ Moreover,
it follows from the definition of the filtration on
$\fD_{G,\crit}^{\ch}$ that this filtration is the canonical one, given
by the powers of annihilation by $\bI^\la$. This implies that
$R^i(\iota^\la)^!(\CB_G^\cla)=0$ for $i>0$ and that the natural map
$$\BV_{\fg,\crit}^{\tau(\la)}\to R^0(\iota^\la)^!(\CB_G^\cla)$$
is an isomorphism. 

\end{proof}
 

\ssec{Proof of \propref{cor FT}}

Consider the relative Chevalley complex
$$\bC^\bullet(\fg[[t]];\g,\fD_{G,\crit}^{\ch,\la}\otimes V^\la)$$ 
taken with respect to the {\it right} action of $\hg_\crit$
on $\fD_{G,\crit}^{\ch,\la}$, as
a complex of objects of $\hg_\crit\mod^{G[[t]]}$. By \lemref{inv in
Weyl}, it is quasi-isomorphic to $\Vl$ itself. We need to show that
the functor $\Psi$ induces isomorphisms
\begin{multline*}
R^i\Hom_{\hg_\crit\mod^{G[[t]]}}
\left(\Vl,\bC^\bullet(\fg[[t]];\g,\fD_{G,\crit}^{\ch,\la}\otimes
V^\la)\right)\to \\ \to
R^i\Hom_{D(\QCoh(\Spec(\fZ_\fg^{\on{unr}})))}
\left(\fz^{\reg,\la}_\fg,\Psi\left(\bC^\bullet(\fg[[t]];
\g,\fD_{G,\crit}^{\ch,\la}\otimes V^\la) \right)\right).
\end{multline*}

\medskip

Taking into account \propref{calc on D}, it remains to show that the
natural map
\begin{equation} \label{bi-complex}
\Psi\left(\bC^\bullet(\fg[[t]];\g,\fD_{G,\crit}^{\ch,\la}\otimes
V^\la)\right)\to \bC^\bullet(\fg[[t]];\g,\CB^\la_G\otimes V^\la)
\end{equation}
is an isomorphism, i.e., that the corresponding spectral sequences 
converges. 

\medskip

The latter is established as follows: we endow the bi-complex in the
LHS of \eqref{bi-complex} with an additional $\BZ$-grading, as in
\cite{FG6}, Section 4 (see also \cite{FG2}, Section 18.11). We obtain
that in each graded degree, the corresponding bi-complex is
concentrated in a shift of a positive quadrant, hence the convergence.

\ssec{Proof of \propref{reg equiv}}

We are now ready to derive \propref{reg equiv}. The fact that functor
$$\fz^{\reg,\la}_\fg\mod\to \hg_\crit\mod_{\reg,\la}^{G[[t]]},$$
given by 
$$\CL\mapsto \Vl\underset{\fz^{\reg,\la}_\fg}\otimes \CL,$$ is an
equivalence, follows from \propref{cor FT} by repeating verbatim the
argument in \cite{FG1}, Section 8.

\medskip

It remains to show that $\Psi(\Vl\underset{\fz^{\reg,\la}_\fg}\otimes
\CL)$ is acyclic away from the cohomological degree $0$, and that the
$0$-th cohomology is isomorphic to $\CL$.

\medskip

Since the functors appearing above commute with direct limits, we can
assume that $\CL$ is finitely presented. Since $\fz^{\reg,\la}_\fg$ is
isomorphic to a polynomial algebra, we can further assume that $\CL$
admits a finite resolution by free $\fz^{\reg,\la}_\fg$-modules.  This
reduces the assertion to the formula \eqref{semiinf of vac}.

\ssec{Exactness}

We are now ready to show that the functor $\Psi$ is exact, i.e.,
that for $\CM\in \hg_\crit\mod^{G[[t]]}$, the object $\Psi(\CM)$
is acyclic away from the cohomological degree $0$. 

\medskip

Indeed, since $\Psi$ commutes with direct limits, we can assume that
$\CM$ is supported at the $k$-th infinitesimal neighborhood of
$\Spec(\fz_\fg^{\reg,\la})$ inside $\Spec(\fZ_\fg^{\on{unr},\la})$.
By (finite) devissage, that is, by representing $\CM$ as a
$k$-iterated successive extension of modules supported at
$\Spec(\fz_\fg^{\reg,\la})$, we may further assume that $\CM$ belongs
to $\hg_\crit\mod_{\reg,\la}^{G[[t]]}$. In the latter case, the
assertion follows from \propref{reg equiv}.

\ssec{Completion of the proof of \thmref{main}}

Let us now show that the functor $\Psi$ induces isomorphisms
\begin{equation} \label{FT arb}
R^i\Hom_{\hg_\crit\mod^{G[[t]]}}
\left(\Vl,\CM\right)\to
R^i\Hom_{D(\QCoh(\Spec(\fZ_\fg^{\on{unr}})))}
\left(\fz^{\reg,\la}_\fg,\Psi(\CM)\right)
\end{equation}
for any $i$ and $\CM\in \hg_\crit\mod^{G[[t]]}$.

\medskip

Both sides commute with direct limits in $\CM$, so we can again assume
that $\CM$ is supported at the $k$-th infinitesimal neighborhood of
$\Spec(\fz_\fg^{\reg,\la})$ inside $\Spec(\fZ_\fg^{\on{unr},\la})$,
and further that it is on object of
$\hg_\crit\mod_{\reg,\la}^{G[[t]]}$, i.e.,
$$\CM\simeq \Vl\underset{\fz^{\reg,\la}_\fg}\otimes \CL$$ for some
$\fz_\fg^{\reg,\la}$-module $\CL$. Using commutation with direct
limits again, we can assume that $\CL$ is finitely presented, and
hence admits a finite resolution by free $\fz_\fg^{\reg,\la}$-modules.
In the latter case, the isomorphism of \eqref{FT arb} follows from
\propref{cor FT}.


By the same devissage procedure we conclude that the functor
$\Psi$ induces isomorphisms 
$$R^i\Hom_{\hg_\crit\mod^{G[[t]]}}(\CM_1,\CM_2)\to
R^i\Hom_{D(\QCoh(\Spec(\fZ_\fg^{\on{unr}})))}(\Psi(\CM_1),\Psi(\CM_2))$$
for any $i$ and $\CM_1,\CM_2\in \hg_\crit\mod^{G[[t]]}$.

\medskip

Finally, it remains to see that $\Psi$ is essentially
surjective. Again, by commutation with direct limits, it is sufficient
to see that any $\CL\in \QCoh(\Spec(\fZ_\fg^{\on{unr}}))$ supported
at the $k$-th infinitesimal neighborhood of
$\Spec(\fz_\fg^{\reg,\la})$ lies in the image of $\Psi$.

\medskip

Since $\Psi$ induces an isomorphism on the level of $\Ext^1$, by
induction, we can assume that $k=0$, i.e., $\CL\in
\fz_\fg^{\reg,\la}\mod$. In the latter case, the assertion follows
from \propref{reg equiv}.

\end{document}